\documentclass[11pt]{amsart}
\usepackage{mathrsfs}
\usepackage{geometry}
\usepackage{graphicx}
\usepackage{amssymb}
\usepackage{epstopdf}
\usepackage{amsmath,amscd}
\usepackage{amsthm}
\usepackage{enumerate}
\usepackage{url,verbatim}
\usepackage{subfig}

\usepackage[normalem]{ulem}
\usepackage{fixme}

\usepackage{setspace}

\RequirePackage[colorlinks,citecolor=blue,urlcolor=blue]{hyperref}
\usepackage{breakurl}
\theoremstyle{plain}
\textwidth=430pt
\textheight=620pt
\calclayout
\DeclareGraphicsRule{.tif}{png}{.png}{`convert #1 `dirname #1 `/`basename #1 .tif`.png}

\newtheorem{theorem}{Theorem}
\newtheorem{definition}[theorem]{Definition}
\newtheorem{lemma}[theorem]{Lemma}
\newtheorem{proposition}[theorem]{Proposition}

\newtheorem{example}[theorem]{Example}
\newtheorem{assumption}[theorem]{Assumption}

\newcommand\ol{\overline}

\newcommand\RR{{\mathbb R}}
\newcommand\ZZ{{\mathbb Z}}
\newcommand\NN{{\mathbb N}}

\newcommand\HH{{\mathbb H}}

\newcommand\si{\sigma}

\newcommand\resp{respectively}

\newcommand\Si{\Sigma}

\renewcommand\ell{l}


\newcommand\GT{\mathbb{G}\mathbb{T}}
\newcommand\CC{\mathbb{C}}
\newcommand\bm{\mathbf{m}}
\newcommand\SH{\mathrm{SH}}
\newcommand\YY{\mathbb{Y}}

\newcounter{mycount}

\numberwithin{equation}{section}
\numberwithin{theorem}{section}
\numberwithin{figure}{section}

\title[Limit shape of perfect matchings on contracting bipartite graphs]{Limit shape of perfect matchings on contracting bipartite graphs}

\date{}

\author{Zhongyang Li}
\address{Department of Mathematics,
University of Connecticut,
Storrs, Connecticut 06269-3009, USA}
\email{zhongyang.li@uconn.edu}
\urladdr{\url{https://mathzhongyangli.wordpress.com}}

\begin{document}
\maketitle

\begin{abstract}
We consider random perfect matchings on a general class of contracting bipartite graphs by letting certain edge weights be 0 on the contracting square-hexagon lattice in a periodic way. We obtain a deterministic limit shape in the scaling limit. The results can also be applied to prove the existence of multiple disconnected liquid regions for all the contracting square-hexagon lattices with certain edge weights, extending the results proved in \cite{ZL18} for contracting square-hexagon lattices where the number of square rows in each period is either 0 or 1.
\end{abstract}

\maketitle

\section{Introduction}

A dimer configuration, or a perfect matching of a graph is a choice of subset of edges such that each vertex is incident to exactly one edge. Dimer configurations on graphs form a natural mathematical model for the structure of matter, for example, the perfect matchings on the hexagonal lattice $\HH$ can describe the double-bond configurations in graphite molecules, where carbon atoms are represented by  vertices of $\HH$, and each double bond corresponds to a present edge in the perfect matching. One may assign each present edge in a perfect matching a non-negative weight depending on the energy of the double bond, and define the probability of a configuration to be proportional to the product of weights of present edges. See \cite{RK09} for an overview.

The limit behaviors of probability measures for perfect matchings on an infinite graph have been studied extensively. Unlike the well-known ferromagnetic Ising model, the limit measures of perfect matchings on infinite periodic bipartite graphs strongly depend on the boundary conditions. Indeed, for each slope of boundary conditions, one can construct a unique ergodic measure such that the measure is uniform conditional on the boundary slope. See \cite{KOS,ssrs}.

In this paper, we consider perfect matchings on a large class of bipartite graphs with a special type of boundary conditions, such that perfect matchings on such graphs form a Schur process, and the partition function (weighted sum of perfect matchings) can be computed by a Schur polynomial depending on edge weights and the bottom boundary condition. The dimer configurations on the class of bipartite graphs discussed here include uniform lozenge tilings of trapezoid domains (\cite{bg}), uniform domino tilings of rectangular domains (\cite{bk}), and perfect matchings on contracting square-hexagon lattices (\cite{BF15,BL17,ZL18,Li182,ZL20}) as special cases.  More precisely, uniform perfect matchings, or equivalently, random tilings on contracting square-hexagon lattices were first studied in \cite{BF15} to illustrate a shuffling algorithm.  The limit shapes and height fluctuations for perfect matchings with periodic edge weights (such that the probability distribution is not necessarily uniform) on contracting square-hexagon lattices with staircase boundary conditions were studied in \cite{BL17}. It is further proved in \cite{ZL18} that when certain edge weights converge to 0 exponentially fast with respect to the size of the graph and the bottom of the graph has piecewise boundary conditions, the liquid region in the limit shape splits into disconnected components when there is at most one row of squares in each period. In this paper, we study perfect matchings on a general class of bipartite graphs obtained by allowing certain edge weights of the contracting square-hexagon lattice to be 0. We shall show that the limit shape of random perfect matchings is deterministic and explicitly prove the equation for frozen boundaries (boundaries separating the liquid region and the frozen region). This way we obtain a 2D analogue of the Law of Large Numbers. The idea to study limit shape here is to analyze the asymptotics of the Schur polynomials at a general point using a formula obtained in \cite{ZL18}. Limit shape of perfect matchings can also be obtained by the variational principle; see \cite{CKP,KO07,ADP20}.

The existence of multiple disconnected components of liquid regions for the limit shape of perfect matchings with certain edge weights converging to 0 exponentially and piecewise bottom boundary conditions on a contracting square-hexagon lattice, in which there are at most one row of squares in each period, was proved in \cite{ZL18}. 
The results proved in this paper can be applied to prove the existence of multiple disconnected liquid regions for limit shape of perfect matchings on an arbitrary contracting square-hexagon lattice with certain edge weights converging to 0 exponentially and piecewise boundary conditions. When multiple disconnected liquid regions in the limit shape occur, one component of the liquid state turns out to be exactly the liquid region of the limit shape of dimer configurations on a contracting bipartite graph studied in this paper, while all the other components are the same as liquid regions of the limit shape of dimer configurations on a contracting hexagon lattice (lozenge tilings).   

The organization of the paper is as follows. In Section \ref{sect:s2}, we define the contracting bipartite graph and the dimer model, and review related known results about the dimer partition function and the Schur polynomial. In Section \ref{sect:s3}, we prove an integral formula for the deterministic limit shape of dimer models on the contracting bipartite graphs, as well as the equations of the frozen boundary separating different phases in the limit shape.
In Section \ref{sect:s4}, we prove the existence of multiple disconnected liquid regions for the limit shape of perfect matchings on any contracting square-hexagon lattices with certain edge weights, extending the results in \cite{ZL18}.

\section{Background}\label{sect:s2}

 In this section, we define the contracting bipartite graph and the dimer model, and review related known results about the dimer partition function and the Schur polynomial.

\subsection{Contracting Bipartite Graphs}

For a positive integer $K$, let 
\begin{eqnarray*}
[K]:=\{1,2,\ldots,K\}.
\end{eqnarray*}

Consider a doubly-infinite binary sequence indexed by integers
$\ZZ=\{\ldots,-2,-1,0,1,2,\ldots\}$.
\begin{equation}
  \check{a}=(\ldots,a_{-2},a_{-1},a_0,a_1,a_2,\ldots)\in\{0,1\}^{\ZZ}.\label{ca}
\end{equation}

The \emph{whole-plane square-hexagon lattice} $\mathrm{SH}(\check{a})$ associated with the  sequence
$\check{a}$ is defined as follows. The vertex set of $\mathrm{SH}(\check{a})$ is a subset of $\frac{\ZZ}{2}\times \frac{\ZZ}{2} $.
Each vertex of $\mathrm{SH}(\check{a})$, represented by a point in the plane with coordinate in $\frac{\ZZ}{2}\times \frac{\ZZ}{2}$, is colored by either black or white.
 For $m\in \ZZ $, the black vertices have $y$-coordinate $m$; while the white
 vertices have $y$-coordinate $m-\frac{1}{2}$. We will label all the vertices
 with $y$-coordinate $t$ $\left(t\in\frac{\ZZ}{2}\right)$ as vertices in the $(2t)$th row. We further
 require that for each $m\in\ZZ$,
 \begin{itemize}
\item  each black vertex on the $(2m)$th row is adjacent to two white vertices in the $(2m+1)$th row; and
\item if $a_m=1$, each white vertex on the $(2m-1)$th row is adjacent to exactly one black vertex in the $(2m)$th row; 
 if $a_m=0$, each white vertex on the $(2m-1)$th row is adjacent to two black vertices in the $(2m)$th row. 

  \end{itemize}
 See Figure \ref{lcc}.

\begin{figure}
\subfloat[Structure of $\mathrm{SH}(\check{a})$ between the $(2m)$th row and the $(2m+1)$th row]{\includegraphics[width=.6\textwidth]{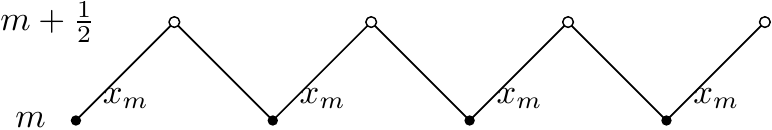}}\\
\subfloat[Structure of $\mathrm{SH}(\check{a})$ between the $(2m-1)$th row and the $(2m)$th row when $a_m=0$]{\includegraphics[width = .6\textwidth]{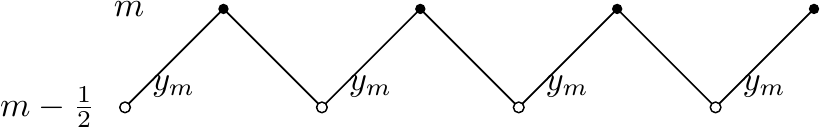}}\\
\subfloat[Structure of $\mathrm{SH}(\check{a})$ between the $(2m-1)$th row and the $(2m)$th row when $a_m=1$]{\includegraphics[width = .55\textwidth]{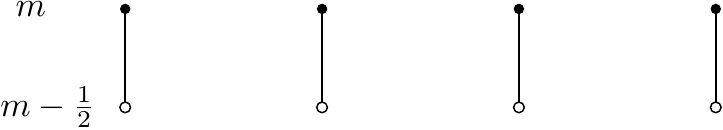}}
\caption{Graph structures of the square-hexagon lattice on the $(2m-1)$th, $(2m)$th, and $(2m+1)$th rows depend on the values of $(a_m)$. Black vertices are along the $(2m)$th row, while white vertices are along the $(2m-1)$th and $(2m+1)$th row.}
\label{lcc}
\end{figure}

Note that in the graph $\SH(\check{a})$, either all the faces on a row are hexagons, or all the faces on a row are squares, depending on the corresponding entry of $\check{a}$. 

We assign edge weights to $\SH(\check{a})$ as follows.

\begin{assumption}\label{apew}
\begin{enumerate}
\item  For $m\in\ZZ$, we assign weight $x_m\geq 0$ to each NE-SW edge joining the
  $(2m)$th row  to the $(2m+1)$th row of $\mathrm{SH}(\check{a})$. We assign
  weight $y_m>0$ to each NE-SW edge joining the $(2m-1)$th row to the $(2m)$th
  row of $\mathrm{SH}(\check{a})$, if such an edge exists; otherwise let $y_m=0$. We assign weight $1$
  to all the other edges. 
\item There exists a fixed positive integer $n$, such that for any $i,j\in \ZZ$ satisfying
\begin{eqnarray*}
[(i-j)\mod n]=0,
\end{eqnarray*} 
we have
\begin{eqnarray*}
x_i=x_j;\qquad y_i=y_j;\qquad a_i=a_j
\end{eqnarray*}
In other words, the graph is periodic with period $1\times n$; and the edge weights are assigned periodically with period $1\times n$.
\item There exists $\gamma\in [0,1), J\subset[n]$, such that 
\begin{enumerate}
\item $\gamma n\in\{0,1,2,\ldots,n-1\}$; and
\item $|J|=\gamma n$; and
\item $x_j=0$, for all $j\in J$;
\item $x_{i}=x>0$, for all $i\in [n]\setminus J$.
\end{enumerate}
 \end{enumerate} 
\end{assumption}

After removing all the edges with weight 0, we obtain a bipartite graph denoted by $\SH(\check{a},X,Y,n)$, where
\begin{eqnarray}
X&=&(\ldots,x_{-1},x_0,x_1,\ldots,);\label{dX}\\
Y&=&(\ldots,y_{-1},y_0,y_1,\ldots);\label{dY}
\end{eqnarray}
are edge weights.

A \emph{contracting bipartite graph} is built from a whole-plane
lattice as follows:

\begin{definition}\label{dfr}
  Let $N\in \NN$. Let $\Omega=(\Omega_1,\ldots,\Omega_N)$ be an $N$-tuple of
  positive integers, such that  $1=\Omega_1<\Omega_2<\cdots<\Omega_{N}$. Set
  The contracting square-hexagon lattice $\mathcal{R}(\Omega,\check{a})$ is a
  subgraph of $\mathrm{SH}(\check{a})$ with $2N$ or $2N+1$ rows of vertices.
  We shall now enumerate the rows of $\mathcal{R}(\Omega,\check{a})$
  inductively, starting from the bottom as follows:
  \begin{itemize}
    \item The first row consists of $N$ vertices $(i,j)$ with $i=\Omega_1-\frac{1}{2},\ldots,\Omega_N-\frac{1}{2}$ and $j=\frac{1}{2}$. We call this row the boundary row of $\mathcal{R}(\Omega,\check{a})$.
    \item When $k=2s$, for $s=1,\ldots N$,  the $k$th row consists of vertices $(i,j)$ with $j=\frac{k}{2}$ and incident to at least one vertex in the $(2s-1)th$ row of the whole-plane square-hexagon lattice $\mathrm{SH}(\check{a})$ lying between the leftmost vertex and rightmost vertex of the $(2s-1)$th row of $\mathcal{R}(\Omega,\check{a})$
    \item When $k=2s+1$, for $s=1,\ldots N$,  the $k$th row consists of vertices $(i,j)$ with $j=\frac{k}{2}$ and incident to two vertices in the $(2s)$th row of  of $\mathcal{R}(\Omega,\check{a})$.
  \end{itemize}
\end{definition}

 Let $\mathcal{R}(\Omega,\check{a},X,Y,n)$ be the corresponding weighted graph with edge weights satisfying Assumption \ref{apew}. Again we remove all the weight-0 edges in $\mathcal{R}(\Omega,\check{a},X,Y,n)$. See Figures \ref{fig:SH1} and \ref{fig:SH2} for examples.

  \begin{definition}
    \label{defI1I2}
  Let $I_1$ (resp.\@ $I_2$) be the set of indices $j$ such that vertices
  of the $(2j-1)$th row are connected to one vertex (resp.\@ two vertices) of
  the $(2j)$th row.
  In terms of the sequence $\check{a}$,

  \begin{equation*}
    I_1=\{k\in [N]\ |\ a_k=1\},\quad
    I_2=\{k\in [N]\ |\ a_k=0\}.
  \end{equation*}
\end{definition}

  The sets $I_1$ and $I_2$ form a partition of $[N]$, and we have
  $|I_1|=N-|I_2|$.
  
  \begin{example}\label{excb}Figure \ref{fig:SH} shows a contracting square-hexagon lattice with $(a_1,a_2,a_3)=(1,0,1)$ and $\Omega=(1,3,6)$. 
  
  When $x_2=0$, and $x_1=x_3=x>0$, we obtain a contracting bipartite graph as shown in Figure \ref{fig:SH1}.
  
    When $x_2=x_3=0$, and $x_1=x>0$, we obtain a contracting bipartite graph as shown in Figure \ref{fig:SH2}.
  
  \begin{figure}
  \includegraphics{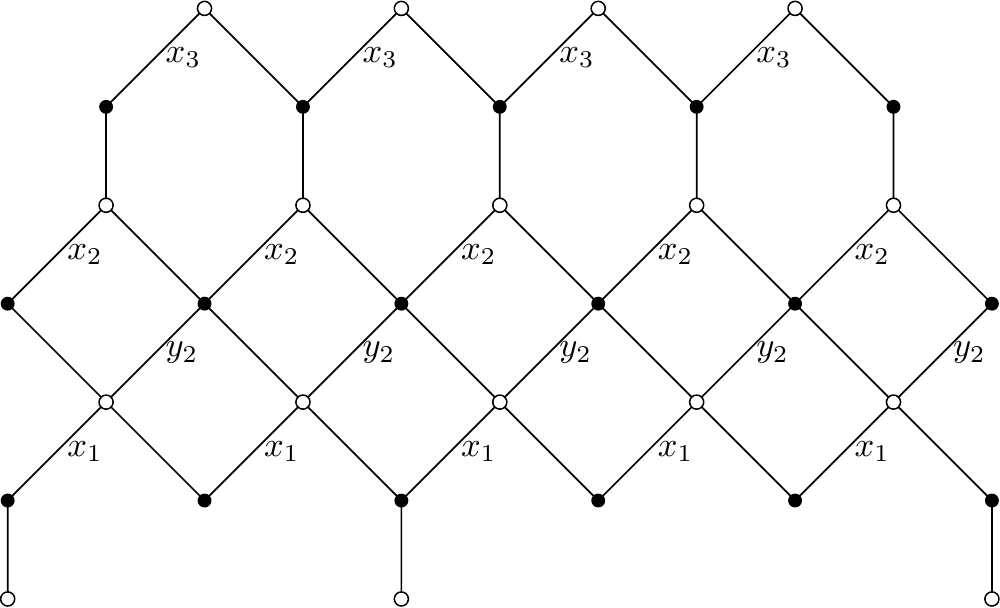}
\caption{Contracting square-hexagon lattice with $N=3$, $m=3$, $\Omega=(1,3,6), (a_1,a_2,a_3)=(1,0,1)$.}
\label{fig:SH}
\end{figure}

\begin{figure}
  \includegraphics{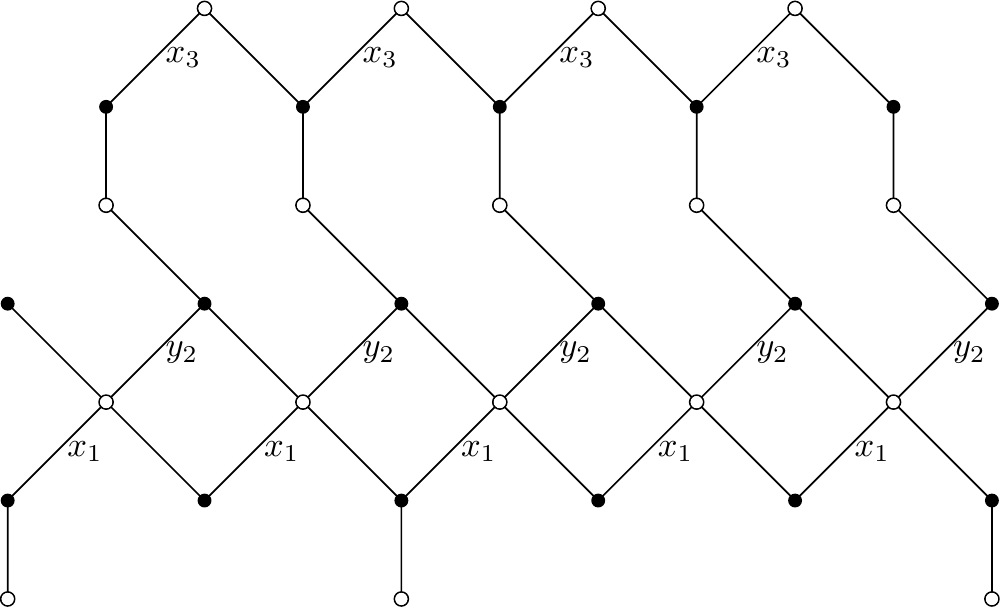}
\caption{Contracting bipartite lattice with $N=3$, $m=3$, $\Omega=(1,3,6), (a_1,a_2,a_3)=(1,0,1)$, $x_2=0$.}
\label{fig:SH1}
\end{figure}

\begin{figure}
  \includegraphics{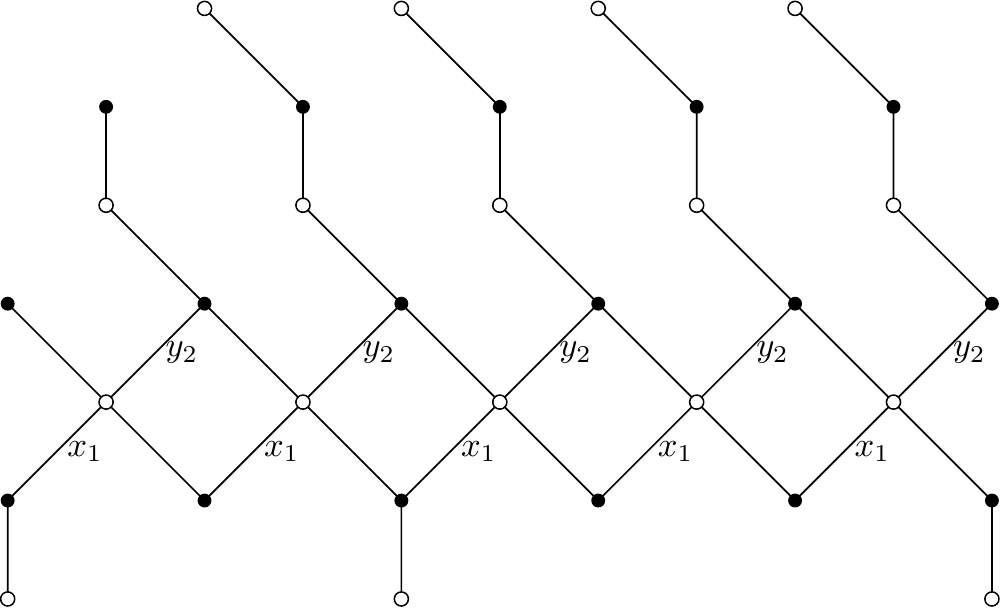}
\caption{Contracting bipartite lattice with $N=3$, $m=3$, $\Omega=(1,3,6), (a_1,a_2,a_3)=(1,0,1)$, $x_2=x_3=0$.}
\label{fig:SH2}
\end{figure}
\end{example}

\subsection{Partitions, counting measure and Schur functions}

\begin{definition}
  A \emph{partition} of length $N$ is a sequence of nonincreasing, nonnegative integers
  $\mu=(\mu_1\geq \mu_2\geq \ldots \geq\mu_N\geq 0)$. Each $\mu_k$ is a
  \emph{component} of the partition $\mu$. The length $N$ of the partition $\mu$
  is denoted by $\ell(\mu)$. The \emph{size} of a partition $\mu$ is
  \begin{equation*}
    |\mu| = \sum_{i=1}^N \mu_i.
  \end{equation*}

  We denote by $\YY_N$ the
  subset of length-N partitions.
\end{definition}

A graphic way to represent a partition $\mu$ is through its
\emph{Young diagram} $Y_\mu$, a collection of $|\mu|$ boxes arranged on
non-increasing rows aligned on the left: with
$\mu_1$ boxes on the first row, $\mu_2$ boxes on the second row,\dots $\mu_N$
boxes on the $N$th row. Some rows may be empty if the corresponding $\mu_k$ is
equal to 0. The correspondence between partitions of length $N$ and
Young diagrams with $N$ (possibly empty) rows is a bijection.

\begin{definition}
 We say that two partitions $\lambda$ and $\mu$ \emph{interlace}, or equivalently, their corresponding Young diagrams $Y_\lambda\subset Y_\mu$ differ by a horizontal
  strip, and
  write $\lambda \prec \mu$ if
\begin{eqnarray*}
\mu_i\geq \lambda_i\geq \mu_{i+1},\ \forall i\in\NN,
\end{eqnarray*}  
 where we assume $\lambda_i=0$ for all $i\geq l(\lambda)$ and $\mu_j=0$ or all $j\geq l(\mu)$.
   We say they \emph{co-interlace} and write $\lambda\prec'\mu$ if
  $\lambda'\prec \mu'$.
\end{definition}

\begin{definition}
  Let $\lambda\in \YY_N$. The \emph{rational Schur function} $s_\lambda$
  associated to $\lambda$ is the homogeneous symmetric function of degree
  $|\lambda|$ in $N$ variables defined as follows
  \begin{enumerate}
  \item If $N=1$, and $\lambda=(\lambda_1)$ then
  \begin{eqnarray*}
  s_{\lambda}(u_1)=u_1^{\lambda_1}.
  \end{eqnarray*}
  \item If $N\geq 2$, and $\lambda=(\lambda_1\geq \lambda_2\geq\ldots\geq \lambda_N\geq 0)$, then
\begin{equation*}
  s_{\lambda}(u_1,\ldots,u_N)=
  \frac{\det_{i,j=1,\ldots,N}(u_i^{\lambda_j+N-j})}{\prod_{1\leq
  i<j\leq N}(u_i-u_j)}.
\end{equation*}
\item Assume $(0,\ldots,0)$ consists of $N$ 0's. Then
\begin{eqnarray*}
s_{(0,\ldots,0)}(u_1,\ldots,u_N)=1
\end{eqnarray*}
\end{enumerate}
\end{definition}

It is straightforward to check that the Schur function defined above is a symmetric polynomial in the variables $(u_1,\ldots,u_N)$.

\begin{definition}Let $Y_{\lambda}$ be  a Young diagram of shape $\lambda$, drawn in the plane such that there are $\lambda_1$ squares on the top row, $\lambda_2$ squares on the 2nd top row, \ldots, and $\lambda_N$ squares on the $N$-th top row (bottom row). The squares in $Y_{\lambda}$ are indexed by $(i,j)$ with $i$ denoting the row number starting from the top and $j$ denoting the column number starting from the left. We may consider the Young diagram as a set consisting of all the squares indexed by such $(i,j)$'s, i.e.,
\begin{eqnarray*}
Y_{\lambda}:=\{(i,j)\in \NN^2: i\in [N], \lambda_i>0,  j\in [\lambda_i]\}.
\end{eqnarray*}

  A  semi-standard Young tableau (SSYT) of shape $\lambda$ is a map $T: Y_{\lambda}\rightarrow \NN$, which assigns a unique positive integer to each square in $Y_{\lambda}$, such that 
  \begin{eqnarray*}
  T(i,j)\leq T(i,j+1);\qquad\mathrm{and}\ T(i,j)<T(i+1,j).
  \end{eqnarray*}
\end{definition}

It is well-known that the Schur polynomial $s_{\lambda}(x_1,\ldots,x_N)$ can be combinatorially interpreted as a sum over SSYT of shape $\lambda$. 

\begin{proposition}\label{p28}
For a Young tableau $T$, let $sh(T)$ be the Young diagram (with 0 components removed) denoting the shape of $T$. Then
  \begin{eqnarray}
  s_{\lambda}(x_1,\ldots,x_N)=\sum_{T: sh(T)=\lambda}\prod_{(i,j)\in Y_{\lambda}}x_{T(i,j)}\label{shst}
  \end{eqnarray}
  where we assume $x_j=0$ for all $j>N$.
\end{proposition}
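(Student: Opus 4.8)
The plan is to prove \eqref{shst} by induction on the number of variables $N$, organizing the tableaux according to the location of their largest entry and exploiting the interlacing relation $\prec$ already introduced. Since we set $x_j=0$ for $j>N$, any SSYT using an entry larger than $N$ contributes the zero monomial, so it suffices to sum over fillings $T\colon Y_\lambda\to[N]$. When $N=1$ the shape $\lambda=(\lambda_1)$ is a single row; the strict-increase-down-columns condition is vacuous, the row must be weakly increasing with entries in $\{1\}$, so the all-$1$'s filling is the only contributor and its weight $x_1^{\lambda_1}$ equals $s_{(\lambda_1)}(x_1)$, which settles the base case.

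For the inductive step, fix $\lambda\in\YY_N$ and let $T$ be an SSYT of shape $\lambda$ with entries in $[N]$. Because entries strictly increase along columns, the value $N$ appears at most once in each column, so the boxes carrying $N$ form a horizontal strip; deleting them yields an SSYT $T'$ of some shape $\mu$ with entries in $[N-1]$, where $\mu\prec\lambda$. Conversely, every pair $(\mu,T')$ with $\mu\prec\lambda$ and $T'$ an SSYT of shape $\mu$ over $[N-1]$ arises exactly once, by reinserting $N$'s on the horizontal strip $Y_\lambda\setminus Y_\mu$; hence $T\leftrightarrow(\mu,T')$ is a bijection. The weight splits multiplicatively as
\[
  \prod_{(i,j)\in Y_\lambda} x_{T(i,j)} \;=\; x_N^{|\lambda|-|\mu|}\prod_{(i,j)\in Y_\mu} x_{T'(i,j)},
\]
so summing over $T$ and invoking the induction hypothesis for $T'$ gives
\[
  \sum_{T\colon sh(T)=\lambda}\ \prod_{(i,j)\in Y_\lambda} x_{T(i,j)} \;=\; \sum_{\mu\prec\lambda} x_N^{|\lambda|-|\mu|}\, s_\mu(x_1,\ldots,x_{N-1}).
\]

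The proof then reduces to the \emph{branching rule} for the bialternant definition,
\[
  s_\lambda(x_1,\ldots,x_N) \;=\; \sum_{\mu\prec\lambda} x_N^{|\lambda|-|\mu|}\, s_\mu(x_1,\ldots,x_{N-1}),
\]
and I expect this determinantal identity to be the main obstacle. To prove it I would factor the Vandermonde denominator as $\prod_{1\le i<j\le N}(x_i-x_j)=\prod_{i=1}^{N-1}(x_i-x_N)\prod_{1\le i<j\le N-1}(x_i-x_j)$, cancel the common factor $\prod_{i<j\le N-1}(x_i-x_j)$ against the denominator of each $s_\mu(x_1,\ldots,x_{N-1})$, and then expand the numerator $\det_{i,j=1}^N(x_i^{\lambda_j+N-j})$ by cofactors along the row indexed by $x_N$. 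Matching the resulting powers of $x_N$ against the factor $\prod_{i=1}^{N-1}(x_i-x_N)$ and the $(N-1)\times(N-1)$ minors forces exactly the inequalities $\lambda_j\ge\mu_j\ge\lambda_{j+1}$ defining $\mu\prec\lambda$; the bookkeeping that the signs and exponent shifts line up with the horizontal-strip condition is the delicate part. Alternatively, the whole identity \eqref{shst} can be obtained in one step via the Lindstr\"{o}m--Gessel--Viennot lemma, by encoding each SSYT as a family of non-intersecting lattice paths whose signed weighted enumeration is precisely the bialternant; this sidesteps the branching rule at the cost of setting up the path model.
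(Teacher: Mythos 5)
The paper does not actually prove Proposition \ref{p28}: it is stated as a well-known fact (the combinatorial interpretation of Schur polynomials as sums over SSYT) with no argument given, so any honest proof you supply is by definition a different route. Your route is the standard textbook one, and its combinatorial half is complete and correct: with the paper's convention $\mu\prec\lambda$ meaning $\lambda_i\ge\mu_i\ge\lambda_{i+1}$, the cells of an SSYT carrying the maximal entry $N$ do form a horizontal strip $Y_\lambda\setminus Y_\mu$, the map $T\mapsto(\mu,T')$ is a bijection, the weights factor as you say, and the base case is right. The convention $x_j=0$ for $j>N$ is also handled correctly at the outset.

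The one place the proposal stops short of a proof is the branching rule for the bialternant, which you correctly identify as the crux but only sketch. As written, the plan of ``expand $\det_{i,j}(x_i^{\lambda_j+N-j})$ by cofactors along the row indexed by $x_N$'' gives only $N$ terms, one for each deleted column, whereas the right-hand side $\sum_{\mu\prec\lambda}x_N^{|\lambda|-|\mu|}s_\mu(x_1,\ldots,x_{N-1})$ is a sum over the typically much larger set of horizontal strips; matching the two requires an additional telescoping that Laplace expansion alone does not supply. The standard way to close this is to go in the other direction: use multilinearity of the determinant in its columns to write
\begin{equation*}
\sum_{\mu\prec\lambda}x_N^{|\lambda|-|\mu|}\det_{i,j\le N-1}\bigl(x_i^{\mu_j+N-1-j}\bigr)
=\det_{i,j\le N-1}\Bigl(\,\sum_{m=\lambda_{j+1}}^{\lambda_j}x_N^{\lambda_j-m}x_i^{m+N-1-j}\Bigr),
\end{equation*}
sum the geometric series in each entry to pick up the factor $\prod_{i=1}^{N-1}(x_i-x_N)^{-1}$, and reduce by column operations to the $N\times N$ alternant. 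With that (or with your LGV alternative, which is equally valid), the argument is complete; either version would be a legitimate substitute for the citation the paper relies on.
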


Let $\lambda\in\YY_N$ be a partition of length $N$. We define the
\emph{counting measure} $m(\lambda)$ corresponding to $\lambda$, which is a probability measure on $\left[0,1+\frac{\lambda_1}{N}\right)$, as follows.
\begin{equation}
m(\lambda)=\frac{1}{N}\sum_{i=1}^{N}\delta\left(\frac{\lambda_i+N-i}{N}\right).\label{ml}
\end{equation}

Let $\lambda(N)\in \YY_N$. Let $\Sigma_N$ be the permutation group of $N$ elements and let $\sigma\in \Sigma_N$. 
Assume that there exists a positive integer $n\in[N]$ such that $x_1,...,x_n$ are pairwise distinct and $\{x_1,...,x_n\}=\{x_1,...,x_N\}$. For $j\in[N]$, let
\begin{eqnarray}
\eta_j^{\sigma}(N)=|\{k\in[N]:k>j,x_{\sigma(k)}\neq x_{\sigma(j)}\}|.\label{et}
\end{eqnarray}
For $i\in[n]$, let
\begin{eqnarray}
\Phi^{(i,\sigma)}(N)=\{\lambda_j(N)+\eta_j^{\sigma}(N):x_{\sigma(j)}=x_i,j\in[N]\}\label{pis}
\end{eqnarray}
and let $\phi^{(i,\sigma)}(N)$ be the partition with length $|\{j\in[N]: x_j=x_i\}|$ obtained by decreasingly ordering all the elements in $\Phi^{(i,\sigma)}(N)$. 

\subsection{Dimer model}

\begin{definition}\label{dfvl}A dimer configuration, or a perfect matching $M$ of a finite graph is a set of edges  such that each vertex of $G$ belongs to an unique edge in $M$.

\end{definition}

\begin{definition} The partition function of the dimer model of a finite graph
$G$ with edge weights $(w_e)_{e\in E(G)}$ is given by
\begin{equation*}
Z=\sum_{M\in \mathcal{M}}\prod_{e\in M}w_e,
\end{equation*}
where $\mathcal{M}$ is the set of all perfect matchings of $G$. The
Boltzmann dimer probability measure on $M$ induced by the weights $w$ is
thus defined by declaring that probability of a perfect matching is equal to
\begin{equation*}
  \frac{1}{Z}\prod_{e\in M} w_e.
\end{equation*}
\end{definition}

 The set of perfect matchings of $\mathcal{R}(\Omega,\check{a},X,Y,n)$ is denoted by
  $\mathcal{M}(\Omega,\check{a},X,Y,n)$. Note that the contracting bipartite lattice $\mathcal{R}(\Omega,\check{a},X,Y,n)$ has degree-2 vertices. One way to study dimer model on a graph $G$ with degree-2 vertices is as follows. Let $v$ be an arbitrary degree-2 vertex, and let $u$ and $w$ be the two neighboring vertices of $v$. Remove the vertex $v$ and its two incident edges $(u,v)$ and $(v,w)$, then identify the two vertices $u$ and $w$ to obtain a new graph $\hat{G}$. It is straight forward to check that perfect matchings on $G$ and $\hat{G}$ are in 1-1 correspondence. If both $(u,v)$ and $(v,w)$ has weight 1, then the partition function for dimer configurations on $G$ and $\hat{G}$ are equal. For the contracting bipartite lattice $\mathcal{R}(\Omega,\check{a},X,Y,n)$, we may do the same thing of removing vertices, edges and identifying vertices as above, however, this will change the scaling limit of the graph and therefore obtain a different limit shape. In this paper, we shall always study perfect matchings on the graph $\mathcal{R}(\Omega,\check{a},X,Y,n)$ without the above manipulations and use the limit shape result to prove existence of multiple disconnected liquid regions on any contracting square-hexagon lattice with certain edge weights.

\begin{definition}\label{dlv}
Let $M$ be  a  perfect  matching  of the contracting bipartite graph $\mathcal{R}(\Omega,\check{a},X,Y,n)$ before removing all the edges with weight $x_i=0$.  We  call  a present  edge $e= ((i_1,j_1),(i_2,j_2))$ in $M$ a $V$-edge if $\max\{j_1,j_2\} \in\NN$ (i.e. if its higher extremity is black) and we call it a $\Lambda$-edge otherwise. In other words, the edges going upwards starting from an odd row are $V$-edges and those ones starting from an even row are $\Lambda$-edges. We also call the corresponding vertices-$(i_1,j_1)$ and $(i_2,j_2)$ of a $V$-edge (resp.\ $\Lambda$-edge) $V$-vertices (resp. $\Lambda$-vertices).
\end{definition}

There is a bijection between dimer configurations on the contracting bipartite graph $\mathcal{R}(\Omega,\check{a},X,Y,n)$, before removing the weight-0 edges, and sequences of
interlacing partitions. More precisely:
\begin{lemma}[\cite{BL17} Theorem 2.10,~\cite{bbccr}] Let $\mathcal{M}(\Omega,\check{a})$ the set of  all the  perfect  matching  of the contracting bipartite graph $\mathcal{R}(\Omega,\check{a},X,Y,n)$ before removing all the edges with weight $x_i=0$, and let $M\in \mathcal{M}(\Omega,\check{a})$. Then each vertex of $\mathcal{R}(\Omega,\check{a},X,Y,n)$ is either a $\Lambda$-vertex or a $V$-vertex with respect to $M$ by Definition \ref{dlv}.
  For given $\Omega=(\Omega_1,\ldots,\Omega_N)$, $\check{a}$, let $\omega\in \YY_N$ be the partition associated to
  $\Omega$ given by
\begin{eqnarray}
\omega=(\Omega_N-N,\Omega_{N-1}-N+1,\ldots,\Omega_1-1)\label{om}
\end{eqnarray}  
\begin{itemize}
\item For $0\leq i\leq (N-1)$ (resp.\ $1\leq j\leq N$), let $\mu^{(i)}\in \YY_i$ (\resp. $\nu^{(j)}$) be the partition associated to the $(2i+1)$th row (resp.\ $2j$th row) of vertices counting from the top. Assume
\begin{eqnarray*}
\mu^{(i)}&=&(\mu^{(i)}_1,\ldots,\mu^{(i)}_i);\\
\nu^{(j)}&=&(\nu^{(j)}_1,\ldots,\nu^{(j)}_j).
\end{eqnarray*}
Then for $1\leq k\leq i$, $\mu^{(i)}_k$ (resp.\ $1\leq k\leq j$, $\nu^{(j)}_k$) is the number of $\Lambda$-vertices on the left of the $k$th $V$-vertices, where the $V$-vertices are counting from the right.
\end{itemize}  
  Then we obtain a
  bijection between the set of perfect matchings
  $\mathcal{M}(\Omega,\check{a})$ and the set $S(\omega,\check{a})$ of
  sequences of partitions
  \begin{equation*}
    \{(\mu^{(N)},\nu^{(N)},\dots,\mu^{(1)}, \nu^{(1)}, \mu^{(0))}\}
  \end{equation*}
  where the signatures satisfy the following properties:
  \begin{itemize}
    \item All the parts of $\mu^{(0)}$ are equal to 0; and
    \item The partition $\mu^{(N)}$ is equal to $\omega$; and
    \item The partitions satisfy the following (co)interlacement relations:
      \begin{equation*}
        \mu^{(N)} \prec' \nu^{(N)} \succ \mu^{(N-1)} \prec' \cdots
        \mu^{(1)} \prec' \nu^{(1)} \succ \mu^{(0)}.
      \end{equation*}
  \end{itemize}
  Moreover, if $a_m=1$, then $\mu^{(N+1-k)}=\nu^{(N+1-k)}$.
  \label{myb}
\end{lemma}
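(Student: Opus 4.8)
The plan is to prove the bijection locally, strip by strip: from a matching $M$ I read off a particle/hole (Maya) encoding on each horizontal row, and then I verify that the two kinds of row-to-row transitions present in $\SH(\check a)$ impose exactly the stated (co)interlacing relations.

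First I turn the vertex classification of Definition \ref{dlv} into partitions. Fix $M$. Every present edge is either a $V$-edge (white lower endpoint, black upper endpoint) or a $\Lambda$-edge (black lower, white upper), and since $M$ covers each vertex once, each vertex inherits a well-defined label from its unique incident edge; this is the first assertion. On a fixed row I then record positions: a position carrying a $V$-vertex is a particle, and every remaining position of the strip, namely a $\Lambda$-vertex or an empty site of the contracting region, is a hole. Once these empty sites are counted together with the $\Lambda$-vertices, the prescription "$\mu^{(i)}_k$ is the number of such holes to the left of the $k$th $V$-vertex from the right" is precisely the standard map from a Maya diagram to a partition, so each $\mu^{(i)}$ and $\nu^{(j)}$ is a genuine partition whose length equals the number of particles on its row.

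Next I pin down the boundary data and then the interlacing, which is the core. On the bottom row every vertex can only be matched upward, so all $N$ of them are particles, located at the positions determined by $\Omega_1,\dots,\Omega_N$, and the Maya rule returns exactly $\omega=(\Omega_N-N,\dots,\Omega_1-1)$, giving $\mu^{(N)}=\omega$; on the top row every vertex is matched downward, so there are no particles and the associated partition is empty, which is the condition that all parts of $\mu^{(0)}$ vanish. For the interlacing I run a purely local analysis of the two transition types drawn in Figure \ref{lcc}. In the black-to-white-above strip each black vertex is joined to two white vertices above it, and the dimer constraints force the particle sets of the two rows to interlace while the contracting shape removes exactly one particle, yielding $\nu^{(k)}\succ\mu^{(k-1)}$. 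In the white-to-black strip governed by $a_m$ the particle count is preserved and one instead obtains the conjugate interlacement $\mu^{(N+1-m)}\prec'\nu^{(N+1-m)}$; when $a_m=1$ each white vertex has a unique black neighbor, so the strip is rigid and the co-interlacement collapses to the equality $\mu^{(N+1-m)}=\nu^{(N+1-m)}$.

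Finally I establish bijectivity by constructing the inverse. Given a sequence of partitions satisfying the listed constraints, the Maya diagrams recover the particle positions on every row, and the interlacing relations guarantee that within each strip there is exactly one placement of dimers compatible with the two prescribed particle sets; gluing these placements over all rows produces a perfect matching of $\mathcal{R}(\Omega,\check a,X,Y,n)$, and the two constructions are mutually inverse by design. The main obstacle is this local interlacing step: one must check case by case, in the honeycomb geometry and in both the $a_m=0$ and $a_m=1$ square geometries, that the admissible local dimer patterns are in bijection with the moves $\prec$, $\prec'$, and the degenerate equality, all while keeping the particle/hole bookkeeping consistent with the contracting shape so that the global length profile $N,N,N-1,N-1,\dots,1,1,0$ is reproduced.
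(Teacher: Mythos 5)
The paper does not actually prove Lemma \ref{myb}: it is imported verbatim from Theorem 2.10 of \cite{BL17} (and \cite{bbccr}), so there is no internal argument to compare yours against. Your proof is the standard one for bijections of this type---encode each row as a particle/hole (Maya) configuration, pin down the two boundary rows, verify the local (co)interlacement strip by strip, and construct the inverse by uniqueness of the local dimer placement---and this is the same strategy as in the cited source. Two remarks. First, you correctly observed that the lemma's literal phrase ``number of $\Lambda$-vertices to the left'' must be read as also counting the absent sites of the contracting row (the positions of the whole-plane lattice omitted from $\mathcal{R}(\Omega,\check{a},X,Y,n)$): the bottom boundary row contains no $\Lambda$-vertices at all, yet must produce the generally nonzero partition $\omega=(\Omega_N-N,\ldots,\Omega_1-1)$, and only the gap-counting convention achieves this. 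That is the one genuinely delicate point of the translation and you handle it properly. Second, your write-up defers the case-by-case check that the admissible dimer patterns within a single strip biject with the moves $\prec$, $\prec'$, and the forced equality when $a_m=1$; that verification is routine but is where essentially all of the content lives, so a complete proof would need to carry it out explicitly for the hexagon strip and for both square geometries, confirming in particular that the length profile drops by one exactly across the black-to-white strips.
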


\begin{proposition}(Proposition 2.15 of \cite{BL17})\label{p16}Let $\mathcal{R}(\Omega,\check{a},X,Y,n)$ be a contracting bipartite graph, whose edge weights satisfy Assumption \ref{apew}.
Then the partition function for perfect matchings on $\mathcal{R}(\Omega,\check{a},X,Y,n)$ is given by
\begin{eqnarray*}
Z=\left[\prod_{i\in I_2}\Gamma_i\right] s_{\omega}(x_{1},\ldots,x_{N})
\end{eqnarray*}
where $\omega\in \YY$ is the partition describing the bottom boundary condition  of $\mathcal{R}(\Omega,\check{a},X,Y,n)$ given by (\ref{om}).
Moreover, for $i\in I_2$, $\Gamma_i$ is defined by
\begin{eqnarray}
\Gamma_i=\prod_{t=i+1}^{N}\left(1+y_{i}x_{t}\right).\label{gi}
\end{eqnarray}.
\end{proposition}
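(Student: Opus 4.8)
The plan is to reduce the weighted sum over matchings to a Schur-process computation and then collapse it with the commutation relations for vertex operators. By Lemma~\ref{myb}, a matching $M\in\mathcal M(\Omega,\check a)$ is the same datum as a chain
\[
\varnothing=\mu^{(0)}\prec\nu^{(1)}\succ'\mu^{(1)}\prec\nu^{(2)}\succ'\cdots\prec\nu^{(N)}\succ'\mu^{(N)}=\omega,
\]
so that $Z=\sum_M\prod_{e\in M}w_e$ becomes a sum over such chains once I express $\prod_{e\in M}w_e$ through the chain. The first task is therefore the weight dictionary: reading off Figure~\ref{lcc} and Assumption~\ref{apew}, the present NE--SW edges of weight $x_m$ sit between the $(2m)$th and $(2m+1)$th rows and those of weight $y_m$ between the $(2m-1)$th and $(2m)$th rows, and in the correspondence the number of present edges of each kind equals the size of the associated (co)strip. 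Tracking the labels through Definition~\ref{dfr} then gives $\prod_{e\in M}w_e=\prod_{m} x_m^{\,p_m}\,y_m^{\,q_m}$, with $p_m,q_m$ the horizontal- and vertical-strip sizes at row $m$, the exponent $q_m$ being forced to vanish exactly when $m\in I_1$ (where Lemma~\ref{myb} imposes $\mu=\nu$).

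With the dictionary in hand I would realize $Z$ as a covacuum-to-vacuum matrix element on the boson Fock space with basis $\{|\lambda\rangle:\lambda\in\YY\}$. Let $\Gamma_-(x),\Gamma_+(x)$ create and destroy horizontal strips (weight $x$ per box) and let $\Gamma'_-(y),\Gamma'_+(y)$ do the same for vertical strips; I only need $\langle\omega|\Gamma_-(x_1)\cdots\Gamma_-(x_N)|\varnothing\rangle=s_\omega(x_1,\dots,x_N)$ (the branching rule behind Proposition~\ref{p28}), the annihilation identity $\Gamma'_+(y)|\varnothing\rangle=|\varnothing\rangle$, and the single crossing relation $\Gamma'_+(y)\Gamma_-(x)=(1+xy)\Gamma_-(x)\Gamma'_+(y)$. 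Reading the chain from the apex $\varnothing$ toward the boundary $\omega$ writes $Z$ as an alternating product of the $\Gamma_-(x_m)$ (one per row) and the operators $\Gamma'_+(y_m)$ (one per square row $m\in I_2$). Normal-ordering this product---pushing every $\Gamma'_+(y_m)$ toward the vacuum past the horizontal operators it meets---liberates one factor $1+y_mx_t$ for each $\Gamma_-(x_t)$ crossed, leaves the string $\Gamma_-(x_1)\cdots\Gamma_-(x_N)$ acting between $\langle\omega|$ and $|\varnothing\rangle$, and hence yields $s_\omega(x_1,\dots,x_N)$ multiplied by a product of such factors. Matching this product with $\prod_{i\in I_2}\Gamma_i$ completes the proof.

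The step I expect to be the genuine obstacle is pinning down the exact index set of the liberated factors, i.e.\ proving that the square row $i$ contributes precisely $\Gamma_i=\prod_{t=i+1}^N(1+y_ix_t)$ and nothing more. This is entirely a matter of the weight dictionary and of the local geometry of a square row in Figure~\ref{lcc}: at a black vertex of such a row the incident $x_i$- and $y_i$-edges are mutually exclusive (the vertex is matched exactly once), so the $x_i$- and $y_i$-strips of the \emph{same} row do not interact, which is what removes the diagonal index $t=i$ and leaves exactly $t=i+1,\dots,N$. Carrying this through forces me to fix the strip-size exponents $p_m,q_m$ unambiguously and to check that the length truncations near the apex (the constraints $\mu^{(i)}\in\YY_i$, $\nu^{(j)}\in\YY_j$) do not disturb the crossing relations used in the normal ordering; both are routine for Schur processes but are where sign-, range-, and off-by-one errors would enter. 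A fully equivalent alternative I would keep in reserve is an induction on $N$ that peels the bottom boundary row, using the single-variable Pieri rule for each horizontal step and the skew Cauchy identity for each square step, thereby producing the factors $\Gamma_i$ one row at a time without the operator formalism.
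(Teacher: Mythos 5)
The paper offers no proof of Proposition \ref{p16}: it is quoted verbatim from Proposition 2.15 of \cite{BL17}, whose argument is precisely the row-by-row induction you keep ``in reserve'' --- each hexagon row is absorbed by the one-variable branching rule $\sum_{\mu\prec\nu}x^{|\nu|-|\mu|}s_\mu(u_1,\dots,u_{k-1})=s_\nu(u_1,\dots,u_{k-1},x)$ and each square row by the dual (skew Cauchy) identity $\sum_{\nu\succ'\mu}y^{|\nu|-|\mu|}s_\nu(u_1,\dots,u_k)=\prod_{t}(1+yu_t)\,s_\mu(u_1,\dots,u_k)$. Your vertex-operator formulation is the same computation written multiplicatively: the crossing relation $\Gamma'_+(y)\Gamma_-(x)=(1+xy)\Gamma_-(x)\Gamma'_+(y)$ \emph{is} the dual Pieri identity, and normal-ordering the word is the induction. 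So the approach is correct and buys nothing beyond the source's proof except notation; it does have the minor advantage that the length truncations $\mu^{(i)}\in\YY_i$, $\nu^{(j)}\in\YY_j$ are handled automatically in the branching-rule version (a Schur function in $j$ variables vanishes on partitions with more than $j$ parts), whereas in the Fock-space version you must check them by hand, as you note.

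The one point to tighten is exactly the step you flag as the obstacle, and your proposed resolution is not the right mechanism. The range $t=i+1,\dots,N$ in $\Gamma_i$ is not explained by the observation that the $x_i$- and $y_i$-edges at a black vertex of row $i$ are mutually exclusive --- mutual exclusivity at a vertex is a feature of \emph{every} adjacent pair of rows and by itself removes nothing from the product. The range is determined purely by the position of $\Gamma'_+(y_i)$ in the ordered operator word, i.e.\ by which variables $x_t$ have already been inserted into the Schur function at the moment the dual Pieri identity is applied at square row $i$; getting $\prod_{t=i+1}^{N}$ rather than $\prod_{t=i}^{N}$ is a statement about whether $\Gamma_-(x_i)$ sits between $\Gamma'_+(y_i)$ and the vacuum, which in turn depends on the precise row-to-partition and row-to-weight dictionary (rows of $\mathcal{R}$ are indexed from the bottom, the partitions $\mu^{(j)},\nu^{(j)}$ from the top, and the $x_m$- and $y_m$-edges sit above and below the black row $2m$ respectively). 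You must fix this dictionary explicitly rather than argue locally; the cheapest sanity check is Example \ref{exl23}, where $N=3$, $I_2=\{2\}$ and $Z=(1+y_2x_3)s_{(3,1,0)}(x_1,x_2,x_3)$, which immediately rules out the range starting at $t=i$.
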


\begin{example}\label{exl23}Figure \ref{fig:SH} shows a contracting square-hexagon lattice with $(a_1,a_2,a_3)=(1,0,1)$ and $\Omega=(1,3,6)$. Then the boundary partition is given by 
\begin{eqnarray*}
\omega=(\Omega_3-3,\Omega_2-2,\Omega_1-1)=(3,1,0)\in\YY_3
\end{eqnarray*}
By Proposition \ref{p16}, the partition function of the dimer configurations on the graph as given in Figure \ref{fig:SH} is
\begin{eqnarray*}
&&(1+y_2x_3)s_{(3,1,0)}(x_1,x_2,x_3)=(1+y_2x_3)\times\\
&&\left(x_1^3x_2 + x_1^3x_3 + x_1^2x_2^2 + 2x_1^2x_2x_3 + x_1^2x_3^2 + x_1x_2^3 + 2x_1x_2^2x_3 + 2x_1x_2x_3^2 + x_1x_3^3 + x_2^3x_3 + x_2^2x_3^2 + x_2x_3^3\right)
\end{eqnarray*}
\end{example}

\section{Limit Shape}\label{sect:s3}

In this section, we prove an integral formula for the deterministic limit shape of dimer models on the contracting bipartitie graphs, as well as the equations of the frozen boundary separating different phases in the limit shape. In Section \ref{sect:s31}, we prove formulas to compute Schur polynomials with some variables equal to 0, which is related to the partition function of dimer configurations on a contracting bipartitie graph, obtained from a contracting square-hexagon lattice by assigning some edge weights to be 0. In Section \ref{sect:s32}, we prove an explicit integral formula for the moments of the limit measure of dimer configurations on each horizontal level of the domain covered by the contracting bipartite graph; see Theorem \ref{p36}. In Section \ref{sect:s33}, we obtain the equation for the boundary curve separating different phases in the limit shape, and show that the boundary curve is an algebraic curve of a special type; more precisely (Theorem \ref{fb}), it is a cloud curve characterized by the numbers of intersections of its dual curve with arbitrary straight lines in $\mathbb{RP}^2$ (Proposition \ref{prop:cloud1}).

\subsection{Schur polynomials with vanishing variables.}\label{sect:s31}
\begin{lemma}\label{l210}Assume that $x_1,\ldots,x_N\in \CC$ such that
\begin{eqnarray}
|\{i\in [N]: x_i=0\}|=b\in [N]\label{bz}
\end{eqnarray}
Let $\lambda\in \YY_N$ such that there are exactly $a \in[N]$ components of $\lambda$ taking value 0. If $a<b$
 Then
\begin{eqnarray}
s_{\lambda}(x_1,\ldots,x_N)=0.\label{shz}
\end{eqnarray}
\end{lemma}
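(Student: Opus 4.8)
The plan is to reduce the statement to the classical fact that a Schur polynomial in $M$ variables vanishes whenever its indexing partition has more than $M$ nonzero parts, and then to read off the required inequality directly from the hypothesis $a<b$. The cleanest route is through the combinatorial (SSYT) expansion in Proposition \ref{p28} rather than the ratio-of-determinants definition, since the latter degenerates when variables coincide or vanish.

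First I would use that $s_\lambda$ is a symmetric polynomial in $x_1,\ldots,x_N$ (as remarked after the definition of the rational Schur function). The value $s_\lambda(x_1,\ldots,x_N)$ is therefore unchanged under permuting its arguments, so I may assume without loss of generality that the $b$ vanishing variables are precisely $x_{N-b+1}=\cdots=x_N=0$, while $x_1,\ldots,x_{N-b}$ are the $N-b$ (not necessarily distinct) nonzero entries. The set of indices $k$ with $x_k\neq 0$ is then $\{1,\ldots,N-b\}$, which has exactly $N-b$ elements.

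Next I would invoke the expansion $s_\lambda(x_1,\ldots,x_N)=\sum_{T:\,sh(T)=\lambda}\prod_{(i,j)\in Y_\lambda}x_{T(i,j)}$ from Proposition \ref{p28}. A single tableau $T$ contributes a nonzero monomial only if every entry $T(i,j)$ is an index $k$ with $x_k\neq 0$, i.e. only if all of its entries lie in $\{1,\ldots,N-b\}$. On the other hand, since exactly $a$ components of $\lambda$ equal $0$, the diagram $Y_\lambda$ has $N-a$ nonempty rows, and hence its first column consists of the $N-a$ boxes $(1,1),(2,1),\ldots,(N-a,1)$. The column-strict condition $T(i,1)<T(i+1,1)$ forces the entries read down this column to be a strictly increasing sequence of length $N-a$; filling it using only indices from $\{1,\ldots,N-b\}$ requires $N-a\le N-b$, that is $b\le a$.

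Finally I would conclude: the hypothesis $a<b$ contradicts $b\le a$, so no semi-standard Young tableau of shape $\lambda$ can be filled using only the indices of nonzero variables. Every term in the expansion of Proposition \ref{p28} therefore vanishes, the sum is empty, and $s_\lambda(x_1,\ldots,x_N)=0$, which is exactly \eqref{shz}. I do not anticipate a genuine obstacle; the only step needing mild care is the bookkeeping tying the two counts together, namely that the number of available nonzero indices is $N-b$ while the first column of $Y_\lambda$ has height $N-a$, so that $a<b$ is precisely the condition rendering that column unfillable.
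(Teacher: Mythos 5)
Your proposal is correct and follows essentially the same route as the paper's own proof: both use the SSYT expansion of Proposition \ref{p28} and observe that the first column of $Y_\lambda$ has $N-a$ boxes requiring strictly increasing entries, which cannot all be indices of nonzero variables when $a<b$, so every summand vanishes.
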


\begin{proof}The proof is based on the combinatorial interpretation of the Schur polynomial $s_{\lambda}(x_1,\ldots,x_N)$ as a sum over SSYT of shape $\lambda$ as specified in Proposition \ref{p28}. When $\lambda$ has exactly $a$ components taking value 0, $\lambda$ has exactly $N-a$ components which are strictly positive; and there are exactly $N-b$ numbers in $(x_1,x_2,\ldots,x_N)$ which are nonzero, denoted by $x_{j_1},x_{j_2},\ldots, x_{j_{N-b}}$ such that
 \begin{eqnarray*}
 j_1<j_2<\ldots<j_{N-b}.
\end{eqnarray*} 
 When $a<b$, we have $N-a>N-b$. In the first column of the Young diagram with shape $\lambda$, there are exactly $N-a$ squares. The integers $1,2,\ldots,N-b$ cannot fill the $N-a$ squares, hence in the strictly increasing sequence of integers in the first column of $Y_{\lambda}$, there must exist an integer $j$ such that $x_j=0$. This means that in the right hand side of (\ref{shst}), each summand is 0. Hence we obtain (\ref{shz}).
\end{proof}



\begin{lemma}\label{l212}Let $N$ be a positive integer. Let $b\in[N]$. Assume that 
\begin{eqnarray*}
(u_1,\ldots,u_{N-b},0^{b})\in \CC^N
\end{eqnarray*}
where 
\begin{eqnarray*}
0^{b}=(0,\ldots,0)\in \RR^{b}
\end{eqnarray*}

Let $\lambda\in \YY_N$ such that there are exactly $a \in[N]$ components of $\lambda$ taking value 0.
If $a\geq b$, let 
\begin{eqnarray}
\tilde{\lambda}:&=&(\lambda_1,\lambda_2,\ldots,\lambda_{N-b})\in \YY_{N-b}\label{dtl}
\end{eqnarray}
 Then
\begin{eqnarray}
s_{\lambda}(u_1,\ldots,u_{N-b},0^b)=s_{\tilde{\lambda}}(u_1,\ldots,u_{N-b}).\label{sez}
\end{eqnarray}
\end{lemma}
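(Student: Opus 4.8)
The plan is to argue combinatorially, mirroring the proof of Lemma \ref{l210}, via the SSYT expansion of Proposition \ref{p28}. Writing $x_i=u_i$ for $i\in[N-b]$ and $x_i=0$ for $N-b<i\leq N$, formula (\ref{shst}) expresses $s_{\lambda}(u_1,\ldots,u_{N-b},0^b)$ as a sum over semi-standard Young tableaux $T$ of shape $\lambda$, and a summand $\prod_{(i,j)\in Y_{\lambda}}x_{T(i,j)}$ is nonzero precisely when every entry of $T$ lies in $\{1,\ldots,N-b\}$. Hence only tableaux of shape $\lambda$ with all entries at most $N-b$ survive in the expansion.

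Next I would compare the Young diagrams of $\lambda$ and $\tilde{\lambda}$. Since $\lambda$ has exactly $a$ zero components, its positive parts are $\lambda_1,\ldots,\lambda_{N-a}$, and the hypothesis $a\geq b$ gives $N-a\leq N-b$. Therefore $\tilde{\lambda}=(\lambda_1,\ldots,\lambda_{N-b})$ retains all positive parts of $\lambda$ and appends exactly $a-b\geq 0$ additional zero parts, so by the set description of $Y_{\lambda}$ given in the excerpt the two diagrams coincide as box sets, $Y_{\lambda}=Y_{\tilde{\lambda}}$.

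To conclude: because the two shapes are literally the same set of boxes, a map $T\colon Y_{\lambda}\to\{1,\ldots,N-b\}$ is a SSYT of shape $\lambda$ if and only if it is a SSYT of shape $\tilde{\lambda}$ (the row-weak and column-strict conditions are identical), and the monomial weights $\prod u_{T(i,j)}$ agree term by term. On the other hand, $s_{\tilde{\lambda}}(u_1,\ldots,u_{N-b})$ is by Proposition \ref{p28} exactly the sum over SSYT of shape $\tilde{\lambda}$ with entries in $\{1,\ldots,N-b\}$. Matching the two sums term by term then yields (\ref{sez}).

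The one point requiring care, which I expect to be the mild obstacle, is confirming that the entry restriction ``$\leq N-b$'' does not silently discard contributing tableaux: the longest column of $Y_{\lambda}$ has height equal to the number of positive parts, namely $N-a\leq N-b$, so a strictly increasing column filling drawn from $\{1,\ldots,N-b\}$ always has enough available values and no tableau is lost. One could alternatively establish the identity analytically from the bialternant formula by sending $b$ of the variables to $0$ and cancelling the vanishing Vandermonde factors, but tracking the resulting limit of determinants is considerably less transparent than the tableau argument.
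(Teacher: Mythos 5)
Your proof is correct and follows essentially the same route as the paper's: the paper's (much terser) argument likewise observes that $\lambda$ and $\tilde{\lambda}$ have the same Young diagram and that setting the last $b$ variables to $0$ amounts to forbidding the entries $N-b+1,\ldots,N$ in the SSYT expansion. Your version simply spells out the details (the box-set identity $Y_{\lambda}=Y_{\tilde{\lambda}}$ from $a\geq b$, and the term-by-term matching of the two sums) that the paper leaves implicit.
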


\begin{proof}In (\ref{sez}), both $\lambda$ and $\tilde{\lambda}$
have the same Young diagram, and evaluating the last $b$ entries of the
left hand side to 0 is equivalent to forbidding integers $N-b+1,...,N$ to appear in
the SSYT. Then (\ref{sez}) follows.
\end{proof}

Assume $\psi\in \YY_m$, where $m$ is a positive integer. Recall that $s_{\psi}(1,\ldots,1)$ can be computed by the Weyl character formula as follows
\begin{eqnarray}
s_{\psi}(1,\ldots,1)=\prod_{1\leq i<j\leq m}\frac{\psi_i-\psi_j+j-i}{j-i}.\label{wf}
\end{eqnarray}

\begin{lemma}\label{lm216}Let $N$ be a positive integer. Assume that $x_1,\ldots,x_N\in \RR$ such that (\ref{bz}) and Assumption \ref{apew} (2)(3) hold with $b=\gamma N$. Let $\lambda\in \YY_N^+$ such that there are exactly $a \in[N]$ components of $\lambda$ taking value 0.
If $a\geq b$, let $\tilde{\lambda}\in \YY_{N-b}$ be defined as in (\ref{dtl}) and let
\begin{eqnarray}
\phi:&=&(\lambda_1+b,\lambda_2+b,\ldots,\lambda_{N-b}+b)\in \YY_{N-b}\label{dph}
\end{eqnarray}
Then 
\begin{eqnarray*}
s_{\lambda}(x_1,\ldots,x_N)=x^{|\lambda|}s_{\phi}(1,\ldots,1)=s_{\tilde{\lambda}}(x,\ldots,x)
\end{eqnarray*}
\end{lemma}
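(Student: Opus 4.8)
The plan is to establish the two asserted equalities by chaining together symmetry of the Schur function, the evaluation-at-zero result of Lemma \ref{l212}, homogeneity, and the Weyl character formula \eqref{wf}. First I would record the precise shape of the variable list: by Assumption \ref{apew}(3) every nonzero $x_i$ equals $x$, and since $b=\gamma N$ of the $x_i$ vanish, the multiset $(x_1,\ldots,x_N)$ consists of $N-b$ copies of $x$ and $b$ zeros. Because $s_\lambda$ is a symmetric polynomial I may reorder the arguments and write $s_\lambda(x_1,\ldots,x_N)=s_\lambda(x,\ldots,x,0^b)$ with $N-b$ leading copies of $x$. The hypothesis $a\geq b$ is exactly what is needed to invoke Lemma \ref{l212}, which yields $s_\lambda(x,\ldots,x,0^b)=s_{\tilde{\lambda}}(x,\ldots,x)$ with $\tilde{\lambda}=(\lambda_1,\ldots,\lambda_{N-b})$. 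This already establishes the rightmost equality in the statement.

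For the middle equality I would first note that since exactly $a\geq b$ of the parts of $\lambda$ vanish and $\lambda$ is nonincreasing, the dropped parts $\lambda_{N-b+1},\ldots,\lambda_N$ are all zero; hence $|\tilde{\lambda}|=|\lambda|$. As $s_{\tilde{\lambda}}$ is homogeneous of degree $|\tilde{\lambda}|$ in its $N-b$ variables, homogeneity gives $s_{\tilde{\lambda}}(x,\ldots,x)=x^{|\lambda|}s_{\tilde{\lambda}}(1,\ldots,1)$. It then remains to show $s_{\tilde{\lambda}}(1,\ldots,1)=s_{\phi}(1,\ldots,1)$, and this is where the Weyl formula \eqref{wf} enters: writing both evaluations as products over $1\leq i<j\leq N-b$, the denominators $j-i$ coincide, while in the numerators $\phi_i-\phi_j=(\lambda_i+b)-(\lambda_j+b)=\lambda_i-\lambda_j=\tilde{\lambda}_i-\tilde{\lambda}_j$, so the numerators agree term by term. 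Thus the two dimension products are literally equal, completing the chain $s_\lambda(x_1,\ldots,x_N)=s_{\tilde{\lambda}}(x,\ldots,x)=x^{|\lambda|}s_{\tilde{\lambda}}(1,\ldots,1)=x^{|\lambda|}s_{\phi}(1,\ldots,1)$.

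I expect no serious obstacle: the argument is a bookkeeping assembly of previously established facts. The one point requiring genuine care is the degree count $|\tilde{\lambda}|=|\lambda|$, which relies essentially on $a\geq b$ to guarantee that truncating the last $b$ parts discards only zeros; were $a<b$ this step would fail, and indeed that regime is instead the content of Lemma \ref{l210}, where the Schur polynomial vanishes. The only other subtlety is purely cosmetic, namely tracking the reordering of variables through the symmetry step so that Lemma \ref{l212} applies with the zeros occupying the last $b$ slots.
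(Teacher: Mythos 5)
Your proof is correct and follows essentially the same route as the paper's: apply Lemma \ref{l212} with all nonzero variables set to $x$, use $a\geq b$ to get $|\tilde{\lambda}|=|\lambda|$ and homogeneity to pull out $x^{|\lambda|}$, and observe via the Weyl formula \eqref{wf} that the uniform shift by $b$ cancels in the differences $\phi_i-\phi_j$, so $s_{\phi}(1,\ldots,1)=s_{\tilde{\lambda}}(1,\ldots,1)$. Your explicit attention to the reordering of variables and to the degree count is a slightly more careful write-up of the same argument.
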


\begin{proof}The identity that $s_{\lambda}(x_1,\ldots,x_N)=s_{\tilde{\lambda}}(x,\ldots,x)$ follows from Lemma \ref{l212} by letting $u_1=u_2=\ldots=u_{N-b}=x$. Under the assumption that $a\geq b$, we obtain that
\begin{eqnarray*}
\lambda=(\tilde{\lambda},0,\ldots,0)\in \YY_N
\end{eqnarray*}
therefore $|\lambda|=|\tilde{\lambda}|$. Therefore,
\begin{eqnarray*}
s_{\tilde{\lambda}}(x,\ldots,x)=x^{|\tilde{\lambda}|}s_{\tilde{\lambda}}(1,\ldots,1)=x^{|\lambda|}s_{\tilde{\lambda}}(1,\ldots,1).
\end{eqnarray*}
By the Weyl formula (\ref{wf}), we obtain that 
\begin{eqnarray*}
s_{\phi}(1,\ldots,1)=s_{\tilde{\lambda}}(1,\ldots,1).
\end{eqnarray*}
Then the lemma follows.
\end{proof}

\begin{example}Consider Example \ref{exl23}.
When $x_2=0$, and $x_1=x_3=x>0$, let $b=1$, and then
\begin{eqnarray*}
\phi=(\omega_1+b,\omega_2+b)=(4,2).
\end{eqnarray*}
 we obtain the partition function of dimer configurations on the graph as given in Figure \ref{fig:SH1} is 
\begin{eqnarray*}
&&(1+y_2x_3)(x_1^3x_3 + x_1^2x_3^2 + x_1x_3^3)=3(1+y_2x)x^4=(1+y_2x)x^{|\omega|}s_{\phi}(1,1),
\end{eqnarray*}
where the last identity follows from (\ref{wf}).

When $x_2=x_3=0$ and $x_1=x>0$, then the partition function of dimer configurations on the graph as given in Figure \ref{fig:SH2} is 0. Indeed, Figure \ref{fig:SH2} does not admit a perfect matching.
\end{example}

\subsection{Limit counting measure}\label{sect:s32}

For computational simplicity, when studying the limit shape of perfect matchings, we shall consider perfect matchings on a graph by translation every row of $\mathcal{R}(\Omega,\check{a},X,Y,n)$ to start from the same vertical line. See Figure \ref{fig:shtl} for an example.

  \begin{figure}[htbp] 
\includegraphics*[width=0.3\hsize]{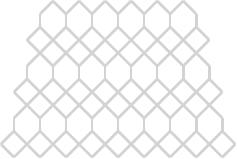}\qquad\qquad\includegraphics*[width=0.3\hsize]{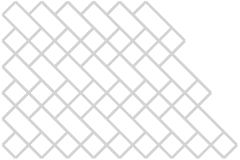}
\caption{A contracting square hexagon lattice: the left graph represents a subgraph of the original lattice; the right graph represents the lattice obtained by letting all the rows of the left graph start from the same vertical line.}\label{fig:shtl}
 \end{figure}

\begin{definition}[\cite{GP15}]
  \label{df23}
  A sequence of signatures $\lambda(N)\in\YY_N$ is called \emph{regular}, if there
  exists a piecewise continuous function $f(t)$ and a constant $C>0$ such that
  \begin{equation*}
    \lim_{N\rightarrow\infty} \frac{1}{N}
    \sum_{j=1}^{N}
    \left|\frac{\lambda_j(N)}{N}-f\left(\frac{j}{N}\right)\right| = 0,
    \quad
    \text{and}
    \quad
    \sup_{1\leq j\leq N}
    \left|\frac{\lambda_j(N)}{N}-f\left(\frac{j}{N}\right)\right|<C\ \text{for
    all $N\geq1$}.
  \end{equation*}
\end{definition}

If $\{\lambda(N)\in \YY_N\}_{N\in\NN}$ is a regular sequence of signatures, then the sequence of
counting measures $m(\lambda(N))$ converges weakly to a measure $\bm$ with
compact support.
By
Theorem~3.6 of~\cite{bk} there exists an explicit function
$H_{\bm}$, analytic in a neighborhood of 1,  depending on the weak
limit $\bm$ such that
\begin{equation}
  \lim_{N\rightarrow\infty}
  \frac{1}{N} \log\left(%
  \frac{s_{\lambda(N)}(u_1,\ldots,u_k,1,\ldots,1)}{s_{\lambda(N)}(1,\ldots,1)}
  \right) = H_{\bm}(u_1)+\cdots+H_{\bm}(u_k),
  \label{nlc}
\end{equation}
and the convergence is uniform when $(u_1,\dotsc,u_k)$ is in a neighborhood
of $(1,\dots,1)$. Here in $s_{\lambda(N)}(u_1,\ldots,u_k,1,\ldots,1)$, there are exactly $N-k$
1's, and in $s_{\lambda(N)}(1,\ldots,1)$, there are exactly $N$ 1's.

Precisely, $H_{\bm}$ is constructed as follows: let
$S_{\bm}(z)=z+\sum_{k=1}^\infty M_k(\bm) z^{k+1}$ be the moment generating
function of the measure $\bm$, where $M_k(\bm)=\int x^k d\bm(x)$, and
$S_{\bm}^{(-1)}$ be the inverse function of $S_{\bm}$. Let $R_{\bm}(z)$ be the
\emph{Voiculescu R-transform} of $\bm$ defined as
\begin{equation*}
  R_{\bm}(z) = \frac{1}{S_\bm^{(-1)}(z)} - \frac{1}{z}.
\end{equation*}
Then
\begin{equation}
  \label{hmz}
  H_{\bm}(u) = \int_{0}^{\ln u} R_\bm(t)dt+ \ln\left( \frac{\ln u}{u-1} \right).
\end{equation}
In particular, $H_{\bm}(1)=0$, and
\begin{equation*}
  H'_\bm(u) = \frac{1}{u S_\bm^{(-1)}(\ln u)} - \frac{1}{u-1}.
\end{equation*}

\begin{definition}
  \label{df21}
  Let
 \begin{eqnarray*}
 X_N=(x_1,x_2,\ldots,x_N)
 \end{eqnarray*} 
  
   Let $\rho$ be a
  probability measure on $\YY_N$. The \emph{Schur generating function}
  $\mathcal{S}_{\rho,X_N}(u_1,\ldots,u_{N})$ with respect to parameters
  $X_N$ is the symmetric Laurent series in $(u_1,\ldots,u_{N})$ given by
  \begin{equation*}
    \mathcal{S}_{\rho,X_N}(u_1,\ldots,u_{N})=
    \sum_{\lambda\in\YY_N} \rho(\lambda)
    \frac{s_{\lambda}(u_1,\ldots,u_{N})}{s_{\lambda}(X_N)},
  \end{equation*}
\end{definition}



\begin{assumption}\label{ap33}Let $\mathcal{R}(\Omega,\check{a},X,Y,n)$ be a contracting bipartite graph with edge weights satisfying Assumption \ref{apew}. Assume the bottom boundary partition $\omega^{(N)}=(\omega^{(N)}_1,\omega^{(N)}_2,\ldots,\omega^{(N)}_N)\in \YY_N$ satisfies 
\begin{itemize}
\item there exists $\alpha\in[\gamma, 1)$ and $\alpha N\in[N]$, such that 
\begin{eqnarray*}
\omega^{(N)}_{(1-\alpha)N+1}=\omega^{(N)}_{(1-\alpha)N+2}=\ldots=\omega^{(N)}_{N}=0.
\end{eqnarray*}
\item Let 
\begin{eqnarray*}
\tilde{\omega}^{(N)}=(\omega_1^{(N)},\omega_2^{(N)},\ldots,\omega_{(1-\gamma)N}^{(N)}),
\end{eqnarray*}
then $\{\tilde{\omega}^{(N)}\}_{N\in\NN}$ form a regular sequence of partitions with counting measures converging to $\tilde{\bm}$ as $N\rightarrow\infty$.
\end{itemize}
\end{assumption}

For any integer $i\in \ZZ$, let
\begin{eqnarray*}
\bar{i}:=\begin{cases}n,&\mathrm{if\ }(i\mod n)=0;\\ (i\mod n),&\mathrm{otherwise}. \end{cases}
\end{eqnarray*}
Hence $\bar{i}\in[n]$. If the edge weights are periodic as in Assumption 2.1 (2), we have
\begin{eqnarray*}
x_i=x_{\bar{i}};\qquad y_i=y_{\bar{i}}
\end{eqnarray*}

\begin{lemma}(Lemma 3.6 of \cite{BL17})
  \label{lm212}
Assume the edge weights of $\mathcal{R}(\Omega,\check{a},X,Y,n)$ satisfy Assumption \ref{apew} (1)(2).  For any $k$ between 0 and $2N-1$, define $t=\lceil k/2\rceil$, and
  let 
  \begin{equation*}
    X^{(N-t)}=(x_{\ol{t+1}},\ldots,x_{\bar{N}}),
    \quad
    \text{and}
    \quad
    W^{(t)}=(x_{\bar{1}},\ldots,x_{\bar{t}}).
  \end{equation*}
 and $\rho^k$ be the probability measure of the partitions corresponding to dimer configurations on the $(2N-k)$th row, counting from the top.
  Then the Schur generating function $\mathcal{S}_{\rho^k,X^{(N-t)}}$ is given by:
\begin{equation*}
\mathcal{S}_{\rho^k,X^{(N-t)}}(u_1,\ldots,u_{N-t})=
\frac{s_{\omega}\left(u_1,\ldots,u_{N-t},W^{(t)}\right)}{s_{\omega}(X^{(N)})}
\prod_{i\in\{1,\ldots,t\}\cap I_2}
\prod_{j=1}^{N-t}\left(\frac{1+y_{\bar{i}}u_j}{1+y_{\bar{i}}x_{\ol{t+j}}}\right).
\end{equation*}
where $\omega$ is given by (\ref{om}).
\end{lemma}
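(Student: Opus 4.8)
The plan is to realize the Schur generating function as a ratio of two dimer partition functions, and then to evaluate both by Proposition \ref{p16}. Let $Z$ be the partition function of $\mathcal R(\Omega,\check a,X,Y,n)$, and let $Z(u)$ be the partition function of the graph obtained by replacing the $x$-weights of all rows lying \emph{above} the $(2N-k)$th row by the formal variables $u_1,\dots,u_{N-t}$ (with $t=\lceil k/2\rceil$), leaving all other weights, and all $y$-weights, unchanged. The subgraph of $\mathcal R(\Omega,\check a,X,Y,n)$ above the $(2N-k)$th row is itself a contracting bipartite graph whose bottom boundary is exactly the partition $\lambda\in\YY_{N-t}$ recorded at that row and whose $x$-weights are $X^{(N-t)}=(x_{\ol{t+1}},\dots,x_{\bar N})$. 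Hence Proposition \ref{p16} applies to this upper subgraph and gives its partition function as $T(\lambda)=c_T\, s_\lambda(X^{(N-t)})$, where $c_T=\prod_{i\in\{t+1,\dots,N\}\cap I_2}\Gamma_i$ collects the hexagon factors of the upper rows; after the weight replacement it becomes $T(\lambda;u)=c_T(u)\, s_\lambda(u_1,\dots,u_{N-t})$, with $c_T(u)$ the same product but with each $x_{\bar r}$ $(r>t)$ replaced by $u_{r-t}$.

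The first key step is the identity $\mathcal S_{\rho^k,X^{(N-t)}}(u)=\frac{c_T}{c_T(u)}\cdot\frac{Z(u)}{Z}$. To prove it, observe that for a fixed matching $M$ the ratio of its modified weight to its original weight is a product of factors $u/x$ over the present $x$-edges above the marked row, so it depends on $M$ only through its restriction to the upper subgraph. Summing over all matchings whose marked-row partition equals $\lambda$, this weight ratio averages to $\frac{T(\lambda;u)}{T(\lambda)}=\frac{c_T(u)}{c_T}\cdot\frac{s_\lambda(u)}{s_\lambda(X^{(N-t)})}$. Multiplying by $\rho^k(\lambda)=\frac1Z\sum_{M:\,\mathrm{marked}=\lambda}\prod_{e\in M}w_e$ and summing over $\lambda$ recovers $\frac{Z(u)}{Z}$ on the left and $\frac{c_T(u)}{c_T}\,\mathcal S_{\rho^k,X^{(N-t)}}(u)$ on the right, which is the identity after rearranging.

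The second step evaluates $Z$ and $Z(u)$ by Proposition \ref{p16}: $Z=\big[\prod_{i\in I_2}\Gamma_i\big]\,s_\omega(X^{(N)})$ and $Z(u)=\big[\prod_{i\in I_2}\Gamma_i(u)\big]\,s_\omega(u_1,\dots,u_{N-t},W^{(t)})$, where $\Gamma_i(u)=\prod_{r=i+1}^N\big(1+y_{\bar i}\,\cdot\big)$ has the weights $x_{\bar r}$ of the upper levels $r>t$ replaced by the corresponding $u$. For an upper hexagon row $i>t$ every factor of $\Gamma_i$ is changed, so $\prod_{i>t,\,i\in I_2}\Gamma_i(u)/\Gamma_i=c_T(u)/c_T$; for a lower hexagon row $i\le t$ only the straddling factors indexed by $r=t+1,\dots,N$ change, producing $\prod_{j=1}^{N-t}\frac{1+y_{\bar i}u_j}{1+y_{\bar i}x_{\ol{t+j}}}$. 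Substituting into the ratio identity, the prefactor $c_T/c_T(u)$ cancels the upper-hexagon contribution $c_T(u)/c_T$, and what remains is exactly $\frac{s_\omega(u_1,\dots,u_{N-t},W^{(t)})}{s_\omega(X^{(N)})}\prod_{i\in\{1,\dots,t\}\cap I_2}\prod_{j=1}^{N-t}\frac{1+y_{\bar i}u_j}{1+y_{\bar i}x_{\ol{t+j}}}$.

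The main obstacle is the bookkeeping of the non-local hexagon factors $\Gamma_i=\prod_{r=i+1}^N(1+y_{\bar i}x_{\bar r})$: since each such factor couples a hexagon row to \emph{all} rows above it, the contribution of a lower hexagon row $i\le t$ genuinely straddles the marked row, and it is precisely this straddling part — and not the self-contained part — that survives the cancellation to give the product over $I_2\cap\{1,\dots,t\}$. The two delicate points to verify carefully are (i) that the upper subgraph is again of the contracting type to which Proposition \ref{p16} applies, with boundary partition $\lambda$ (here one should check the mild case distinction between $k$ even and $k$ odd, i.e.\ whether the marked row is a $\nu$- or a $\mu$-row), and (ii) that the weight ratio used in the first step depends on a matching only through its upper part, so that the conditional average collapses cleanly to $T(\lambda;u)/T(\lambda)$.
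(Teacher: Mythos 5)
The paper does not prove this lemma at all: it is imported verbatim as Lemma 3.6 of \cite{BL17}, where it is established by induction on the row index $k$, applying at each step the branching identities for Schur functions (the skew Cauchy/Pieri rules: an interlacing step $\nu\succ\mu$ specializes one variable of the generating function to the corresponding $x$-weight, and a co-interlacing step $\mu\prec'\nu$ multiplies it by $\prod_j\frac{1+yu_j}{1+yx_j}$). Your argument is a genuinely different, global route: you condition on the partition at the marked row, factor $Z=\sum_\lambda T(\lambda)B(\lambda)$, and evaluate both $Z$ and the weight-modified $Z(u)$ by the product formula of Proposition \ref{p16}, so that the Schur generating function appears as $\frac{c_T}{c_T(u)}\cdot\frac{Z(u)}{Z}$. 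The ratio identity itself is correct, and when the marked row is a white ($\mu$-type) row the three-way sorting of the factors $\Gamma_i(u)/\Gamma_i$ does reproduce the stated formula. What this buys is a proof that reuses an already-available global result instead of redoing the transfer-matrix induction; note, though, that you must invoke the general-weight (non-periodic, formal-variable) version of Proposition \ref{p16}, i.e.\ Proposition 2.15 of \cite{BL17}, since the version quoted in this paper is stated under the periodicity of Assumption \ref{apew}.

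There is, however, a genuine gap at the point you dismiss as a ``mild case distinction.'' When the marked row is a black ($\nu$-type) row, the subgraph above it terminates at a black row and is \emph{not} a contracting lattice in the sense of Definition \ref{dfr} (whose bottom boundary is a white row), so Proposition \ref{p16} cannot be applied to it verbatim; you would need to prove the analogous product formula for such ``black-bottomed'' truncations. Worse, the square row adjacent to the marked row is exactly where the bookkeeping breaks: its factor $\Gamma_i=\prod_{r=i+1}^{N}(1+y_{\bar i}x_{\bar r})$ straddles the cut so that $\Gamma_i(u)/\Gamma_i$ is a product over $j\geq 2$ rather than $j\geq 1$, and hence fits neither of your two bins (it is not fully cancelled by $c_T(u)/c_T$, nor does it equal the full factor $\prod_{j=1}^{N-t}\frac{1+y_{\bar i}u_j}{1+y_{\bar i}x_{\overline{t+j}}}$ claimed in the lemma). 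Verifying that the final formula is nevertheless the same for $k=2t-1$ and $k=2t$ (as the statement asserts, since $t=\lceil k/2\rceil$ is the same for both) amounts to proving the one-step co-interlacing identity for Schur generating functions --- which is precisely the engine of the induction in \cite{BL17}. So the even/odd case is not a formality to ``check carefully'' but the one place where your argument still needs the key computation; as written, the proof is complete only for one parity of $k$.
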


When the edge weights are assigned periodically as in Assumption \ref{apew} (1)(2)(3), and the boundary partition $\omega$ satisfies Assumption \ref{ap33}, let 
\begin{eqnarray*}
\tilde{\omega}=(\omega_1,\omega_2,\ldots,\omega_{(1-\gamma)N})
\end{eqnarray*}
For each integer $k$ between $0$ and $2N-1$, we consider the partition corresponding to dimer configurations on the $(2N-k)$th row (the top row is the 1st row) of the contracting bipartite graph $\mathcal{R}(\Omega,\check{a},X,Y,n)$. When the edge weights satisfy Assumption 2.1 (3), i.e., when certain $x_i$'s are 0,  by Lemma \ref{l210}, any possible partition on the $(2N-k)$th row with strictly positive probability to occur falls into one of the following two cases.
\begin{enumerate}
\item If $k=2t-1$ for some $t\in[N]$, then the partition corresponding to the dimer configuration on the $(2N-2t+1)$th row satisfies $\mu^{(N-t)}\in \YY_{N-t}$ and
\begin{eqnarray*}
\mu^{(N-t)}_{\lfloor(1-\gamma)(N-t)\rfloor+1}=\mu^{(N-t)}_{\lfloor(1-\gamma)(N-t)\rfloor+2}=\ldots=\mu^{(N-t)}_{N-t}=0.
\end{eqnarray*}
Let
\begin{eqnarray*}
\tilde{\mu}^{(N-t)}:=\left(\mu_{1}^{(N-t)},\mu_2^{(N-t)},\ldots,\mu^{(N-t)}_{\lfloor(1-\gamma)(N-t)\rfloor}\right)\in \YY_{\lfloor (1-\gamma)(N-t) \rfloor}.
\end{eqnarray*}
\item If $k=2t$ for some $t\in\{0,1,\ldots,N-1\}$, then the partition corresponding to the dimer configuration on the $(2N-2t)$th row satisfies $\nu^{(N-t)}\in \YY_{N-t}$ and
\begin{eqnarray*}
\nu^{(N-t)}_{\lfloor(1-\gamma)(N-t)\rfloor+1}=\nu^{(N-t)}_{\lfloor(1-\gamma)(N-t)\rfloor+2}=\ldots=\nu^{(N-t)}_{N-t}=0.
\end{eqnarray*}
Let
\begin{eqnarray*}
\tilde{\nu}^{(N-t)}:=\left(\nu_{1}^{(N-t)},\nu_2^{(N-t)},\ldots,\nu^{(N-t)}_{\lfloor(1-\gamma)(N-t)\rfloor}\right)\in \YY_{\lfloor (1-\gamma)(N-t) \rfloor}.
\end{eqnarray*}
\end{enumerate}
Let $\tilde{\rho}^k$ (resp. $\rho^k$) be the probability measure on $\tilde{\mu}^{(N-t)}$ or $\tilde{\nu}^{(N-t)}$ (resp.\ $\mu^{(N-t)}$ or $\nu^{(N-t)}$).

Assume
\begin{eqnarray*}
u_{(1-\gamma)(N-t)+1}=u_{(1-\gamma)(N-t)+2}=\ldots=u_{N-t}=0.
\end{eqnarray*}
By Lemmas \ref{lm216} and \ref{lm212} we obtain
\begin{eqnarray*}
&&\mathcal{S}_{\rho^k,X^{(N-t)}}(u_1,\ldots,u_{(1-\gamma)(N-t)},0,\ldots,0)\\
&=&
\frac{s_{\tilde{\omega}}\left(u_1,\ldots,u_{(1-\gamma)(N-t)},x,\ldots,x\right)}{s_{\tilde{\omega}}(x,\ldots,x)}
\prod_{i\in\{1,\ldots,t\}\cap I_2}
\prod_{j=1}^{(1-\gamma)(N-t)}\left(\frac{1+y_{\bar{i}}u_j}{1+y_{\bar{i}}x}\right).
\end{eqnarray*}
Let $\tilde{X}^{(N-t)}$ consist of all the components of $X^{(N-t)}$ which are nonzero. Let
\begin{eqnarray*}
1^{\lfloor(1-\gamma)(N-t)\rfloor}=(1,\ldots,1)\in \RR^{\lfloor(1-\gamma)(N-t)\rfloor}
\end{eqnarray*}

Then
\begin{eqnarray}
&&\mathcal{S}_{\tilde{\rho}^k, 1^{\lfloor(1-\gamma)(N-t)\rfloor}}\left(u_1,\ldots,u_{\lfloor(1-\gamma)(N-t)\rfloor}\right)\label{tse}\\
&=&\sum_{\tilde{\lambda}\in \YY_{\lfloor(N-t)(1-\gamma)\rfloor}}\tilde{\rho}^k(\tilde{\lambda})
\frac{s_{\tilde{\lambda}}\left(u_1,\ldots,u_{\lfloor(1-\gamma)(N-t)\rfloor}\right)}{s_{\tilde{\lambda}}(1^{\lfloor(1-\gamma)(N-t) \rfloor})}\notag\\
&=&\sum_{\tilde{\lambda}\in \YY_{\lfloor(N-t)(1-\gamma)\rfloor}}\tilde{\rho}^k(\tilde{\lambda})
\frac{s_{\tilde{\lambda}}\left(u_1x,\ldots,u_{\lfloor(1-\gamma)(N-t)\rfloor}x\right)}{s_{\tilde{\lambda}}(\tilde{X}^{(N-t)})}\notag\\
&=&\sum_{\lambda\in \YY_{N-t}} \rho^k(\lambda)
\frac{s_{\lambda}\left(u_1x,\ldots,u_{\lfloor(1-\gamma)(N-t)\rfloor}x,0,\ldots,0\right)}{s_{\lambda}({X}^{(N-t)})}\label{le3}\\
&=&\mathcal{S}_{\rho^k,X^{(N-t)}}(u_1x,\ldots,u_{(1-\gamma)(N-t)}x,0,\ldots,0)\notag
\end{eqnarray}
In particular we have $\rho^k(\lambda)=0$ if $\lambda\in \YY_{N-t}$ is not an extension of $\tilde{\lambda}\in \YY_{\lfloor(1-\gamma)(N-t)\rfloor}$ by 0's, according to Lemma \ref{l210}. The expression (\ref{le3}) also follows from Lemma \ref{l212}.

Letting $N\to\infty$, $\frac{t}{N}\to \kappa\in[0,1)$, by (\ref{nlc}) we have
\begin{multline*}
\lim_{(1-\kappa)N\rightarrow\infty}
\frac{1}{(1-\kappa)N} \log\mathcal{S}_{\rho^k,X^{(N-t)}}(u_1x,\ldots,u_{\ell}x,x,\ldots,x,0,\ldots,0)\\
=\frac{1-\gamma}{1-\kappa}\sum_{1\leq i\leq \ell}\left[Q_{\kappa}(u_i)-Q_{\kappa}(1)\right],
\end{multline*}
where
\begin{equation*}
  Q_{\kappa}(u)=H_{\tilde{\bm}}\left(u\right)
    +\frac{\kappa}{n(1-\gamma)}
    \sum_{i\in [n]\cap I_2}\log(1+y_ix u).
\end{equation*}

By (\ref{tse}), we obtain
\begin{eqnarray*}
\lim_{N\rightarrow\infty}
\frac{1}{(1-\gamma)(1-\kappa)N} \log\mathcal{S}_{\tilde{\rho}^k, 1^{\lfloor(1-\gamma)(N-t)\rfloor}}\left(u_1,\ldots,u_l,1,\ldots,1\right)\\
=\frac{1}{1-\kappa}\sum_{1\leq i\leq \ell}\left[Q_{\kappa}(u_i)-Q_{\kappa}(x)\right],
\end{eqnarray*}

\begin{lemma}[\cite{bg}, Theorem~5.1]
  \label{l28}
  Let $(\rho_N)_{N\geq 1}$ be a sequence of measures such that for each
  $N$, $\rho_N$ is a probability measure on $\GT_N^+$, and 
 for every $j$, the following convergence holds uniformly in a complex
 neighborhood of $(1,\ldots,1)\in\mathbb{C}^j$
  \begin{equation}
    \lim_{N\rightarrow\infty} \frac{1}{N}
    \log\mathcal{S}_{\rho_N,1^N} \left(u_1,\ldots,u_j,1,\ldots,1\right)
    = Q(u_1)+\cdots +Q(u_j),
    \label{lm}
\end{equation}
 with $Q$ an analytic function in a neighborhood of $1$.
 Then the sequence of random measures $(m(\rho_N))_{N\geq 1}$ converges as
 $N\rightarrow\infty$ in
probability in the sense of moments to a deterministic measure $\mathbf{m}$ on
$\RR$, whose moments are given by
\begin{eqnarray*}
\int_{\RR}x^j\mathbf{m}(dx)&=&\sum_{l=0}^{j}\frac{j!}{l!(l+1)!(j-l)!}\left.\frac{\partial^l}{\partial
u^l}\left(u^jQ'(u)^{j-l}\right)\right|_{u=1}.\\
\end{eqnarray*}
\end{lemma}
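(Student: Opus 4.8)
The plan is to follow the strategy of Bufetov--Gorin: reduce the convergence in probability to the study of the expectations and variances of power sums of the shifted coordinates $\ell_i=\lambda_i+N-i$, express these quantities through differential operators that are diagonalized by Schur polynomials, and then read off the asymptotics from the hypothesis (\ref{lm}).

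First I would unwind the definitions. Set $\ell_i=\lambda_i+N-i$ and write $p_j(\ell)=\sum_{i=1}^{N}\ell_i^{j}$ for the $j$-th power sum. By (\ref{ml}) the $j$-th moment of the random measure $m(\rho_N)$ is the random variable $\int_{\RR}x^{j}\,m(\rho_N)(dx)=N^{-(j+1)}p_j(\ell)$, so convergence in probability in the sense of moments means that, for each fixed $j$, this random variable converges in probability to the number on the right-hand side of the claimed identity. By Chebyshev's inequality it is enough to prove that (i) the expectation $\EE_{\rho_N}\!\left[N^{-(j+1)}p_j(\ell)\right]$ converges to that number, and that (ii) its variance tends to $0$; as the limit is deterministic, (i) and (ii) together give the assertion.

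For the expectations I would introduce the operators
\begin{equation*}
  \mathcal{D}_k F(u)=\frac{1}{V(u)}\sum_{i=1}^{N}\left(u_i\frac{\partial}{\partial u_i}\right)^{k}\bigl(V(u)F(u)\bigr),\qquad V(u)=\prod_{1\le i<j\le N}(u_i-u_j).
\end{equation*}
Since $V(u)\,s_{\lambda}(u)=\det_{i,j}(u_i^{\ell_j})$ and each monomial $\prod_i u_i^{\ell_{\sigma(i)}}$ is an eigenfunction of $\sum_i(u_i\partial_{u_i})^{k}$ with eigenvalue $p_k(\ell)$, the Schur polynomials diagonalize these operators, $\mathcal{D}_k s_{\lambda}=p_k(\ell)\,s_{\lambda}$. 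Using linearity and $s_{\lambda}(1^N)/s_{\lambda}(1^N)=1$ one obtains the exact identities
\begin{equation*}
  \bigl(\mathcal{D}_j\,\mathcal{S}_{\rho_N,1^N}\bigr)\big|_{u=1^N}=\EE_{\rho_N}[p_j(\ell)],\qquad
  \bigl(\mathcal{D}_j^{2}\,\mathcal{S}_{\rho_N,1^N}\bigr)\big|_{u=1^N}=\EE_{\rho_N}[p_j(\ell)^{2}].
\end{equation*}

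The main obstacle is extracting the $N\to\infty$ asymptotics of these exact expressions, and it has two intertwined facets: the operator $\mathcal{D}_j$ differentiates in all $N$ variables, whereas (\ref{lm}) controls $\mathcal{S}$ only when finitely many variables move off $1$, and moreover $V(u)$ and $V(u)^{-1}$ are individually singular at $u=1^N$ although $\mathcal{D}_j\mathcal{S}$ is regular there. I would resolve both facets as in Bufetov--Gorin. Writing $\mathcal{S}=\exp(\log\mathcal{S})$, the hypothesis (\ref{lm}) gives that $\tfrac1N\log\mathcal{S}_{\rho_N,1^N}$ converges to $\sum_i Q(u_i)$ uniformly in a complex neighborhood of $1^N$, so by Cauchy's estimates all of its derivatives converge as well, with $\partial_{u_i}\log\mathcal{S}\sim N\,Q'(u_i)$ to leading order. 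The Vandermonde conjugation then reorganizes the all-variable action of $\mathcal{D}_j$ into a single contour integral in one variable encircling $u=1$, whose poles are supplied by $V^{-1}$; a residue evaluation (equivalently, a Lagrange-inversion expansion about $u=1$) produces the closed form with the coefficients $\tfrac{j!}{l!(l+1)!(j-l)!}$, depending only on finitely many derivatives of $Q$ at $1$. This residue reduction is where essentially all the work lies, and it establishes (i).

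Finally, for (ii) I would compare $\EE_{\rho_N}[p_j(\ell)^2]$ with $(\EE_{\rho_N}[p_j(\ell)])^2$. The crucial feature is that the limit in (\ref{lm}) is \emph{additive}, $Q(u_1)+\cdots+Q(u_j)$: this product structure forces the two-point contribution read off from $\mathcal{D}_j^2\mathcal{S}$ to agree, at the leading order $N^{2(j+1)}$, with the square of the one-point contribution read off from $\mathcal{D}_j\mathcal{S}$. Hence the leading terms cancel in the variance, leaving a remainder of strictly lower order, so that $N^{-2(j+1)}\,\mathrm{Var}_{\rho_N}\!\left(p_j(\ell)\right)\to0$. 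Together with (i), this yields the convergence in probability together with the stated moment formula.
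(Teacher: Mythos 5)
The paper does not prove this lemma: it is imported verbatim from the cited reference (\cite{bg}, Theorem~5.1), so there is no in-paper argument to compare against. Your plan is a faithful reconstruction of the proof given in that reference — the Schur-generating-function method with the Vandermonde-conjugated operators $\mathcal{D}_k$ diagonalized by $s_\lambda$ with eigenvalue $p_k(\ell)$, Chebyshev reduction to first and second moments, and the observation that additivity of the limit kills the variance at leading order — and the coefficients $\tfrac{j!}{l!(l+1)!(j-l)!}$ do arise exactly as the residue at $u=1$ of $\tfrac{1}{j+1}\oint \tfrac{du}{2\pi \mathbf{i}\, u}\bigl(uQ'(u)+\tfrac{u}{u-1}\bigr)^{j+1}$, consistent with the contour form used later in Theorem \ref{p36}. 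The only caveat is that the step you yourself flag as carrying ``essentially all the work'' (localizing the $N$-variable action of $\mathcal{D}_j$ at $u=1^N$ to a one-variable contour integral, using uniformity of \eqref{lm} and Cauchy estimates to control the derivatives of $\log\mathcal{S}$) is asserted rather than executed; as a proof plan it is correct and matches the source.
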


By Lemma \ref{l28}, we obtain the following theorem about limit shape of perfect matchings on the contracting bipartite graph $\mathcal{R}(\Omega,\check{a},X,Y,n)$.

\begin{theorem}\label{p36}Let $k\in\{0,1,\ldots,2N-1\}$, such that $\lim_{N\rightarrow\infty}\frac{k}{2N}=\kappa$. Assume the edge weights and boundary partition of the contraction bipartite graph $\mathcal{R}(\Omega,\check{a},X,Y,n)$ satisfy Assumptions \ref{apew} and \ref{ap33}. Then the sequence of random measures $(m(\tilde{\rho}^k))_{N\in\NN}$ converges as
 $N\rightarrow\infty$ in
probability in the sense of moments to a deterministic measure $\tilde{\mathbf{m}}^{\kappa}$ on
$\RR$, whose moments are given by
\begin{eqnarray*}
\int_{\RR}x^j \tilde{\mathbf{m}}^{\kappa}(dx)=\frac{1}{2(j+1)\pi \mathbf{i}}\oint_1\frac{dz}{z}\left[F_{\kappa}(z)\right]^{j+1}
\end{eqnarray*}
where
\begin{eqnarray*}
F_{\kappa}(z)=\frac{z}{1-\kappa}H'_{\tilde{\bm}}(z)+\frac{\kappa z}{n(1-\kappa)(1-\gamma)}\sum_{i\in [n]\cap I_2}\frac{y_i x}{1+y_ixz}+\frac{z}{z-1}
\end{eqnarray*}
and the integration goes over a small positively oriented contour around 1.
\end{theorem}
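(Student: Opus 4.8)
The plan is to obtain Theorem \ref{p36} by feeding the Bufetov--Gorin moment formula of Lemma \ref{l28} with the sequence of measures $\tilde{\rho}^k$ on $\YY_M$, where $M=\lfloor(1-\gamma)(N-t)\rfloor$, and then converting the resulting finite sum into the stated contour integral by a residue computation at $z=1$. Most of the analytic input is already assembled above: Lemmas \ref{lm216} and \ref{lm212} together with the identities \eqref{tse}--\eqref{le3} express the Schur generating function of $\tilde{\rho}^k$ at the reference point $1^{M}$ through that of $\rho^k$, and combining this with the limit displayed just before Lemma \ref{l28} and the normalization $\mathcal{S}_{\tilde{\rho}^k,1^{M}}(1,\ldots,1)=1$ identifies the convergence as being of the form
\begin{equation*}
\lim_{N\to\infty}\frac{1}{M}\log\mathcal{S}_{\tilde{\rho}^k,1^{M}}(u_1,\ldots,u_\ell,1,\ldots,1)=\sum_{i=1}^{\ell}Q(u_i),\qquad Q(u):=\frac{1}{1-\kappa}\bigl[Q_\kappa(u)-Q_\kappa(1)\bigr],
\end{equation*}
the additive constant being fixed so that $Q(1)=0$. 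First I would verify that $Q$ meets the hypotheses of Lemma \ref{l28}: it is analytic in a neighborhood of $1$ because $H_{\tilde{\bm}}$ is analytic near $1$ by Theorem~3.6 of \cite{bk}, while each summand $\log(1+y_ixu)$ is analytic near $u=1$ since $y_i,x>0$ forces $1+y_ix\neq 0$; and the required uniform convergence on a complex neighborhood of $(1,\dots,1)$ is inherited from \eqref{nlc} and the explicit finite product in Lemma \ref{lm212}.

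Granting these hypotheses, Lemma \ref{l28} gives the moments of the limit measure $\tilde{\mathbf{m}}^{\kappa}$ as
\begin{equation*}
\int_{\RR}x^{j}\,\tilde{\mathbf{m}}^{\kappa}(dx)=\sum_{l=0}^{j}\frac{j!}{l!\,(l+1)!\,(j-l)!}\left.\frac{\partial^{l}}{\partial u^{l}}\Bigl(u^{j}Q'(u)^{j-l}\Bigr)\right|_{u=1},
\end{equation*}
where differentiating the explicit $Q$ yields
\begin{equation*}
Q'(u)=\frac{1}{1-\kappa}H'_{\tilde{\bm}}(u)+\frac{\kappa}{n(1-\kappa)(1-\gamma)}\sum_{i\in[n]\cap I_2}\frac{y_{i}x}{1+y_{i}xu}.
\end{equation*}
Setting $F_{\kappa}(z):=zQ'(z)+\frac{z}{z-1}$ and substituting this expression reproduces verbatim the function $F_\kappa$ in the statement, so it remains only to match the finite sum above against the contour integral.

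The crux is therefore the purely analytic identity
\begin{equation*}
\sum_{l=0}^{j}\frac{j!}{l!\,(l+1)!\,(j-l)!}\left.\frac{\partial^{l}}{\partial u^{l}}\Bigl(u^{j}Q'(u)^{j-l}\Bigr)\right|_{u=1}=\frac{1}{2(j+1)\pi\mathbf{i}}\oint_{1}\frac{dz}{z}\,F_{\kappa}(z)^{j+1},
\end{equation*}
which I would establish by a residue computation. Writing $G(z):=zQ'(z)$, holomorphic near $z=1$, the binomial expansion
\begin{equation*}
F_{\kappa}(z)^{j+1}=\Bigl(G(z)+\tfrac{z}{z-1}\Bigr)^{j+1}=\sum_{m=0}^{j+1}\binom{j+1}{m}G(z)^{j+1-m}\frac{z^{m}}{(z-1)^{m}}
\end{equation*}
shows that the $m=0$ term has no pole at $1$ and contributes nothing, while for $m\geq 1$ the residue of $\tfrac1z\binom{j+1}{m}G(z)^{j+1-m}\tfrac{z^{m}}{(z-1)^{m}}$ equals $\binom{j+1}{m}\tfrac{1}{(m-1)!}\tfrac{d^{m-1}}{dz^{m-1}}\bigl(z^{m-1}G(z)^{j+1-m}\bigr)\big|_{z=1}$. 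Reindexing by $l=m-1$, using $z^{l}G(z)^{j-l}=z^{j}Q'(z)^{j-l}$ and the elementary identity $\frac{1}{j+1}\binom{j+1}{l+1}\frac{1}{l!}=\frac{j!}{l!\,(l+1)!\,(j-l)!}$, the two sides coincide and the theorem follows. I expect the main obstacle to be exactly this residue bookkeeping, together with the verification of uniform convergence and analyticity that licenses Lemma \ref{l28}; the remaining steps are a straightforward assembly of the results proved earlier.
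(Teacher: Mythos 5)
Your proposal is correct and follows essentially the same route as the paper, which simply invokes Lemma \ref{l28} applied to the limit of $\frac{1}{M}\log\mathcal{S}_{\tilde{\rho}^k,1^{M}}$ computed just before it; your residue computation converting the finite sum of Lemma \ref{l28} into the contour integral $\frac{1}{2(j+1)\pi\mathbf{i}}\oint_1\frac{dz}{z}F_\kappa(z)^{j+1}$ is exactly the (standard) step the paper leaves implicit, and your bookkeeping with $l=m-1$ and $z^{l}G(z)^{j-l}=z^{j}Q'(z)^{j-l}$ checks out.
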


\subsection{Frozen boundary}\label{sect:s33}
 The frozen region is defined to be the region where the density of the limit of the counting measures for $\rho^k$ is 0 or 1, while the liquid region is the region where this density is strictly between 0 or 1.  The frontier between the the frozen region and the liquid region is called the frozen boundary.

We consider a special case of bottom boundary conditions.

\begin{assumption}\label{ap37}
\begin{enumerate}
\item Let\begin{multline}
\label{cc1}
\Omega=
(A_1,A_1+1,\ldots,B_1-1,B_1,A_2,A_2+1,\ldots,
B_2,\ldots,A_s,A_s+1,\ldots,
B_s),
\end{multline}
where
\begin{eqnarray*}
\sum_{i=1}^{s}(B_i-A_i+1)=N.
\end{eqnarray*}
In other words, $\Omega$ is an $N$-tuple of integers whose entries take values
of all the integers in $\cup_{i=1}^s[A_i,B_i]$.

We shall consider $\Omega(N)$
changing with $N$. Suppose that for each $N$, $\Omega(N)$ has corresponding $A_i(N)$, $B_i(N)$, for
a fixed $s$.
Assume also that $A_i(N),B_i(N),\Omega(N)_N-N$ have the following asymptotic
growth:
\begin{equation}
\label{cc2}
A_i(N)=\alpha_i N+o(N),\ \ B_i(N)=b_iN+o(N),
\end{equation}
where $0=\alpha_1<b_1<\ldots<\alpha_s<b_s$ are new parameters such that $\sum_{i=1}^{s}(b_i-\alpha_i)=1$. 
\item Suppose Part (1) of the assumption holds with $b_1\geq \gamma$ for $\gamma\in[0,1)$ as given in Assumption \ref{apew} (3).
\end{enumerate}

\end{assumption}

Recall that the Stieljes transform of $\tilde{\bm}$ is given by
\begin{eqnarray*}
\mathrm{St}_{\tilde{\bm}}(t)=\int_{\RR}\frac{\tilde{\bm}(ds)}{t-s}
\end{eqnarray*}

Following similar computations as in Section 4 of \cite{BL17}, we obtain that
\begin{proposition}\label{p38}Suppose Assumptions \ref{apew}, \ref{ap37} hold.
Let 
\begin{eqnarray}
F_{\kappa}(z,t):=\frac{t}{1-\kappa}-\frac{\kappa z}{(z-1)(1-\kappa)}+\frac{\kappa z}{n(1-\kappa)(1-\gamma)}\sum_{i\in[n]\cap I_2}\frac{y_ix}{1+y_ixz}\label{fk}
\end{eqnarray}
Then the following system of equations
\begin{eqnarray}
\begin{cases}F_{\kappa}(z,t)=\frac{\chi}{(1-\kappa)(1-\gamma)}-\frac{\gamma}{1-\gamma}\\\mathrm{St}_{\tilde{\bm}}(t)=\log(z) \end{cases}\label{ez}
\end{eqnarray}
have at most one pair of complex conjugate (non-real) roots in $z$. A point $(\chi,\kappa)$ is in the liquid region if and only if the above system of equations have one pair of complex conjugate roots. 
\end{proposition}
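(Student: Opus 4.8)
The strategy is to read off the density of the limiting measure $\tilde{\mathbf{m}}^{\kappa}$ from the moment formula of Theorem \ref{p36}, to identify the critical-point equation governing this density with the system \eqref{ez}, and then to count its non-real solutions, following the computations of Section~4 of \cite{BL17}.

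First I would use Theorem \ref{p36}, which expresses the moments of $\tilde{\mathbf{m}}^{\kappa}$ as contour integrals of $[F_{\kappa}(z)]^{j+1}$ around $z=1$. A moment sequence of this form arises, in the quantized free-probability framework of \cite{BL17,bg}, as the inverse-Stieltjes data of $\tilde{\mathbf{m}}^{\kappa}$: the function $F_{\kappa}$ is the functional inverse of a normalized Stieltjes transform, so that the density of $\tilde{\mathbf{m}}^{\kappa}$ at a bulk position is a strictly monotone function of $\arg z_{c}$, where $z_{c}$ is the root of $F_{\kappa}(z)=\mathrm{const}$ lying in the open upper half-plane; in particular the density lies strictly between $0$ and $1$ exactly when $z_{c}$ is non-real, and equals $0$ or $1$ when all roots are real. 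The affine change of variables between the support coordinate of $\tilde{\mathbf{m}}^{\kappa}$ and the physical horizontal coordinate $\chi$ -- built from the contraction factor $1-\gamma$, the height factor $1-\kappa$, and the shift by $\gamma$ accounting for the deleted zero-weight columns -- converts the constant into the right-hand side $\frac{\chi}{(1-\kappa)(1-\gamma)}-\frac{\gamma}{1-\gamma}$ appearing in \eqref{ez}.

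Next I would pass from the one-variable function $F_{\kappa}(z)$ of Theorem \ref{p36} to the two-variable function \eqref{fk}. Using the identity $\mathrm{St}_{\tilde{\bm}}(t)=S_{\tilde{\bm}}(1/t)$ together with $H'_{\tilde{\bm}}(z)=\frac{1}{zS_{\tilde{\bm}}^{(-1)}(\ln z)}-\frac{1}{z-1}$, I would introduce the auxiliary variable $t$ through $\mathrm{St}_{\tilde{\bm}}(t)=\log z$; this is precisely the second equation of \eqref{ez}, and it gives $zH'_{\tilde{\bm}}(z)=t-\frac{z}{z-1}$. Substituting this into the expression for $F_{\kappa}(z)$ in Theorem \ref{p36} and simplifying the rational terms collapses it exactly to $F_{\kappa}(z,t)$ of \eqref{fk}, so that $F_{\kappa}(z)=\mathrm{const}$ becomes the coupled system \eqref{ez}.

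Finally, for the counting statement I would use Assumption \ref{ap37}: since $\Omega$ consists of runs of consecutive integers, $\tilde{\bm}$ has a piecewise-constant density and hence $\mathrm{St}_{\tilde{\bm}}(t)$ is a finite sum of logarithms, so that $z=e^{\mathrm{St}_{\tilde{\bm}}(t)}$ is a \emph{rational} function of $t$. Substituting this $z(t)$ into \eqref{fk} and clearing denominators reduces \eqref{ez} to a single polynomial equation $P(t)=0$ with real coefficients, whose non-real roots therefore come in complex conjugate pairs. The heart of the argument is then a real-root count: a sign and monotonicity analysis of $t\mapsto F_{\kappa}(z(t),t)$ on each interval of the real axis between consecutive poles produces at least $\deg P-2$ real roots, leaving at most one conjugate pair, which proves the first assertion. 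Combining this with the density interpretation of the first paragraph yields the second assertion: $(\chi,\kappa)$ lies in the liquid region, i.e.\ the density lies strictly in $(0,1)$, if and only if $z_{c}$ is non-real, i.e.\ if and only if \eqref{ez} has one pair of complex conjugate roots. I expect the real-root count to be the main obstacle, since it requires controlling the monotonicity and pole structure of $F_{\kappa}(z(t),t)$ for a general union-of-intervals boundary measure $\tilde{\bm}$; this is the step that mirrors, and must adapt, the detailed computation in Section~4 of \cite{BL17}.
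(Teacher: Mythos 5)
Your proposal is correct and follows essentially the same route as the paper: the paper likewise derives the system \eqref{ez} from the asymptotics of the Schur generating function, identifies the density of the limit measure as $\frac{1}{\pi}$ times the argument of the non-real root via the Stieltjes inversion formula, and bounds the number of non-real roots by exhibiting at least $d-2$ real intersections of a straight line with a piecewise monotone curve sweeping $(-\infty,\infty)$ on each interval between poles, deferring the details to Section~4 of \cite{BL17} exactly as you do.
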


The idea to prove Proposition \ref{p38}, as in Section 4 of \cite{BL17}, is based on the fact that at each point in the frozen region, the density of the limit counting measure of partitions is either 0 or 1, while at each point in the liquid region, the density of the limit counting measure of partitions is in the open interval $(0,1)$. Using the identity 
(See e.g.~\cite[Lemma~4.2]{bk}):
  \begin{equation}
    \label{l36}
    f(x)=-\lim_{\epsilon\rightarrow 0+}\frac{1}{\pi}\Im(\mathrm{St}_{\mu}(x+i\epsilon)),
  \end{equation}
  where $\Im$ denote the imaginary part of a complex number, and $f(x)$ is the  continuous density of the measure $\mu$ on $\RR$ with respect to the Lebesgue measure; by computing the Stieljes transform of the limit counting measure $\tilde{\bm}$, we can express its density as the argument of the unique non-real solution in $z$ of (\ref{ez}) in the upper half plane, divided by $\pi$, in the case when (\ref{ez}) has exactly one pair of complex (non-real) conjugate roots. It follows that $(\chi,\kappa)$ is in the liquid region if and only if (\ref{ez}) have one pair of complex conjugate roots. 
  
  To prove that the system of equations (\ref{ez}) have at most one pair of complex conjugate (non-real) roots in $z$, as in Section 4 of \cite{BL17}, we express the solutions of $(\ref{ez})$ in $\RR$ as the $x$-coordinates of intersection points in $\RR^2$ of a straight line and a piecewise monotone curve which takes every value in $(-\infty,\infty)$ in each subinterval. Then we find at least $d-2$ real solutions for (\ref{ez}) (here $d$ is the total number of solutions of (\ref{ez}) in $z$). It follows that the system of equations (\ref{ez}) have at most one pair of complex conjugate (non-real) roots in $z$.

When the bottom boundary condition satisfies Assumption \ref{ap37}, let
\begin{eqnarray*}
\tilde{\alpha}_1&=&\frac{\alpha_1}{1-\gamma}=0\\
\tilde{\alpha}_i&=&\frac{\alpha_i-\gamma}{1-\gamma},\ \forall i\in[s]\setminus\{1\}\\
\tilde{b}_j&=&\frac{b_j-\gamma}{1-\gamma},\ \forall j\in[s].
\end{eqnarray*}

It is straightforward to check that 
\begin{eqnarray*}
\sum_{i=1}^{s}(\tilde{b}_s-\tilde{\alpha}_s)=1.
\end{eqnarray*}
Note that $\tilde{\bm}$,  the limit counting measure for the bottom boundary partitions, has density $1$ in each of the interval $[\tilde{\alpha}_i,\tilde{b}_i]_{i\in[s]}$, and $0$ everywhere else.
The Stieltjes transform can be computed explicitly from the definition:
  \begin{equation}
    \mathrm{St}_{\tilde{\bm}}(t)=\log\prod_{i=1}^{s}\frac{t-\tilde{\alpha}_i}{t-\tilde{b}_j}.\label{cst}
  \end{equation}

\begin{theorem}
  \label{fb}
  Let $c_i=\frac{1}{y_ix}$, for $i\in I_2\cap[n]$.
  The frozen boundary of the limit of a contracting bipartite graph $\mathcal{R}(\Omega,\check{a},X,Y,n)$
  satisfying Assumptions \ref{apew} and \ref{ap37} is a rational algebraic curve $C$ with an
  explicit parametrization $(\chi(t),\kappa(t))$ defined as follows:
  \begin{eqnarray*}
  \chi(t)&=&\left(t-\frac{J(t)}{J'(t)}\right)(1-\gamma)+\gamma\left(1-\frac{1}{J'(t)}\right),\\
    \kappa(t)&=&\frac{1}{J'(t)}
  \end{eqnarray*}
  where
  \begin{multline}
    J(t)=\Phi_s(t)\left[\frac{1}{\Phi_s(t)-1}-\frac{1}{n(1-\gamma)}\sum_{i\in
    I_2\cap[n]}\frac{1}{\Phi_s(t)+c_i}\right]
    \label{djt}
  \end{multline}
  and
  \begin{equation*}
    \Phi_s(t)=\frac{(t-\tilde{\alpha}_1)(t-\tilde{\alpha}_2)\cdots(t-\tilde{\alpha}_s)}{(t-\tilde{b}_1)(t-\tilde{b}_2)\cdots(t-\tilde{b}_s)}
  \end{equation*}
\end{theorem}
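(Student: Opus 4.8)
The plan is to combine Proposition \ref{p38} with the general principle that the frozen boundary is the locus at which the unique pair of complex conjugate roots of the system (\ref{ez}) degenerates into a single real double root. Thus the boundary of the liquid region is exactly the \emph{discriminant locus} of (\ref{ez}), viewed as an equation in one parametrizing variable, i.e.\ the set of $(\chi,\kappa)$ for which the governing equation acquires a repeated root.

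First I would eliminate $z$. The second equation of (\ref{ez}), together with the explicit Stieltjes transform (\ref{cst}), reads $\log z=\log\Phi_s(t)$, so on the relevant branch $z=\Phi_s(t)$. Substituting this into the first equation of (\ref{ez}) and using $c_i=1/(y_ix)$ to rewrite $\frac{y_ix}{1+y_ix\Phi_s(t)}=\frac{1}{\Phi_s(t)+c_i}$, the sum over $i\in[n]\cap I_2$ together with the term $-\frac{\kappa z}{(z-1)(1-\kappa)}$ collapses into the single rational function $J(t)$ of (\ref{djt}); indeed a direct computation gives
\[
F_{\kappa}(\Phi_s(t),t)=\frac{t}{1-\kappa}-\frac{\kappa}{1-\kappa}J(t).
\]
Hence the first equation of (\ref{ez}) reduces to the scalar rational equation
\[
t-\kappa J(t)=\frac{\chi}{1-\gamma}-\frac{\gamma(1-\kappa)}{1-\gamma}
\]
in the single variable $t$, with parameters $(\chi,\kappa)$.

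Next I would impose the double-root condition. Setting $R(t):=t-\kappa J(t)-\frac{\chi}{1-\gamma}+\frac{\gamma(1-\kappa)}{1-\gamma}$ and clearing denominators, the frozen boundary is the set of $(\chi,\kappa)$ for which the numerator of $R$ has a repeated root; away from the poles of $J$ this is equivalent to $R(t)=0$ and $R'(t)=0$. Since $R'(t)=1-\kappa J'(t)$, the derivative condition yields $\kappa(t)=1/J'(t)$, and substituting this back into $R(t)=0$ and solving for $\chi$ gives
\[
\chi(t)=(1-\gamma)\left(t-\frac{J(t)}{J'(t)}\right)+\gamma\left(1-\frac{1}{J'(t)}\right),
\]
which is exactly the claimed parametrization. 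Because $\Phi_s(t)$ is a rational function of $t$, so are $J(t)$ and $J'(t)$, whence $(\chi(t),\kappa(t))$ is a pair of rational functions of $t$; therefore the frozen boundary is a rational algebraic curve $C$.

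The hard part will be the first step, namely rigorously justifying that the topological boundary of the liquid region coincides with the double-root (zero-discriminant) locus, rather than merely being contained in it. This requires using Proposition \ref{p38} (that (\ref{ez}) carries \emph{at most one} complex conjugate pair) to argue that as $(\chi,\kappa)$ exits the liquid region the conjugate pair must collide on the real axis, producing a genuine real double root; one must also track the multivaluedness of $\log$ and of the correspondence $t\mapsto\Phi_s(t)$ to ensure that the branch on which $z=\Phi_s(t)$ is the one carrying the relevant roots, and check that the degenerate root occurs away from the poles of $J$ so that the clean condition $R(t)=R'(t)=0$ is legitimate. The remaining algebraic reductions are routine, following Section~4 of \cite{BL17}.
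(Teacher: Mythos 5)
Your proposal matches the paper's proof essentially verbatim: the paper likewise substitutes $z=\Phi_s(t)$ into the first equation of (\ref{ez}), reduces it to the scalar equation $t-\kappa J(t)=\frac{\chi-\gamma(1-\kappa)}{1-\gamma}$, and reads off the parametrization from the double-root conditions $R(t)=R'(t)=0$. The issue you flag as the ``hard part'' is exactly what the paper delegates to Proposition \ref{p38} (the system carries at most one complex conjugate pair, present precisely on the liquid region), so your argument is complete in the same sense the paper's is.
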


\begin{proof}

By (\ref{fk}), the first equation of (\ref{ez}) is linear in $t$. Solving $t$ from the first equation of (\ref{ez}), we obtain
\begin{eqnarray}
t=\frac{\kappa z}{z-1}-\frac{\kappa z}{n(1-\gamma)}\sum_{i\in[n]\cap I_2}\frac{1}{z+c_i}
+\frac{\chi-\gamma(1-\kappa)}{1-\gamma}\label{t1}
\end{eqnarray}
By (\ref{cst}), we may write the second equation of (\ref{ez}) as follows:
\begin{eqnarray}
z=\prod_{i=1}^s\frac{t-\tilde{\alpha}_i}{t-\tilde{b}_i}\label{t2}
\end{eqnarray}

  Hence (\ref{ez}) is equivalent to:
  \begin{equation}
      \begin{cases}
	\Phi_s(t)&=z;\\
	(1-\kappa)F_\kappa(z,t) &=
	t-\left(\frac{\kappa z}{z-1}-\frac{\kappa z}{n(1-\gamma)}\sum_{i\in[n]\cap I_2}\frac{1}{z+c_i}\right)
=\frac{\chi-\gamma(1-\kappa)}{1-\gamma}
      \end{cases}\label{pkf}
    \end{equation}
    We plug the expression of $z$ from the first equation into the second
    equation, and note that the condition that the resulting equation has a
    double root is equivalent to the following system of equations
    \begin{equation*}
      \begin{cases}
	\frac{\chi-\gamma(1-\kappa)}{1-\gamma}=t-\kappa J(t),\\
	1=\kappa J'(t).
      \end{cases}
    \end{equation*}
    where $J(t)$ is defined by \eqref{djt}. Then the parametrization of the
    frozen boundary follows.
  \end{proof}
  
\begin{definition}\label{dfdu}Let $C\subset \RR P^2$ be a curve in the projective plane. The dual curve $C^{\vee}$ is defined by
\begin{eqnarray*}
C^{\vee}=\{(X,Y,Z)\in \RR P^2:  \{(x,y,z)\in \RR P^2: Xx+Yy+Zz=0\}\ \mathrm{is\ a\ tangent\ line\ of}\ C\}.
\end{eqnarray*}
The class of a curve is the degree of its dual curve.
\end{definition}

\begin{definition}(\cite{KO07})A degree $d$ real algebraic curve $C\subset \RR P^2$ is winding if:
\begin{enumerate}
\item it intersects every line $L\subset \RR P^2$ in at least $d-2$ points counting multiplicity; and
\item there exists a point $p_0\in \RR P^2$ called center, such that every line through $p_0$ intersects $C$ in $d$ points.
\end{enumerate}
The dual curve of a winding curve is called a cloud curve.
\end{definition}

\begin{proposition} \label{prop:cloud1}Let $s$ be the
  number of segments on the bottom boundary, and $m$
  be the number of distinct values of $c_i=\frac{1}{xy_i}$ in one period.
 Then
 \begin{enumerate}
 \item The frozen boundary $C$ is a cloud curve of class $(m+1)s$, if $\gamma<b_1$
 \item The frozen boundary $C$ is a cloud curve of class $(m+1)(s-1)$, if $\gamma=b_1$.
  \end{enumerate}

\end{proposition}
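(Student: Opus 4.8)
The plan is to analyze the dual curve $C^{\vee}$ of the frozen boundary $C$ directly, and to use the parametrization $(\chi(t),\kappa(t))$ obtained in Theorem \ref{fb} to read off the class of $C$ as the degree of this dual curve. The key structural input is that $C$ is given explicitly as a rational curve parametrized by the single variable $t$, so its dual $C^{\vee}$ is also rational and parametrized by $t$; the degree of $C^{\vee}$ equals the number of points $t$ for which the tangent line to $C$ passes through a generic fixed point of $\RR P^2$. Equivalently, by the winding/cloud duality in the two definitions above, I would show that $C$ is itself the dual of a winding curve, and that the winding curve meets a generic line in exactly $(m+1)s$ points (respectively $(m+1)(s-1)$ points), which is the class of $C$.

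First I would rewrite $J(t)$ from \eqref{djt} as a single rational function of $t$. Since $\Phi_s(t)$ is a ratio of two degree-$s$ polynomials, each summand $\frac{1}{\Phi_s(t)+c_i}$ and the term $\frac{1}{\Phi_s(t)-1}$ becomes, after clearing denominators, a ratio of degree-$s$ polynomials in $t$; the sum over the $m$ distinct values of $c_i$ in one period (together with the $-1$ term) then produces a rational function whose numerator and denominator have degrees governed by $(m+1)s$. The crucial bookkeeping step is to determine the exact degree of the numerator of $J(t)$ and of $J'(t)$, accounting for cancellations. I expect that $J(t)$ has degree profile making $\chi(t)$ and $\kappa(t)$ rational of a degree that forces $C$ to have class $(m+1)s$ in the generic case $\gamma<b_1$. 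The drop to $(m+1)(s-1)$ when $\gamma=b_1$ should come from a genuine degeneration: when $\gamma=b_1$, the definitions $\tilde b_1=\frac{b_1-\gamma}{1-\gamma}=0=\tilde\alpha_1$ force a common factor $t$ to cancel between numerator and denominator of $\Phi_s(t)$, effectively reducing the number of ``segments'' seen by $\Phi_s$ from $s$ to $s-1$, and correspondingly lowering the class by $m+1$.

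To convert the degree count into the cloud-curve conclusion, I would verify the two clauses of the winding-curve definition for the appropriate dual object. For the intersection-multiplicity count, I would show that a generic line $L\subset\RR P^2$ pulls back under the parametrization to a polynomial equation in $t$ whose degree is exactly the claimed class, and that all but at most two of its roots are real, matching the ``at least $d-2$ points'' requirement; this is precisely the real-rootedness phenomenon already exploited in the proof of Proposition \ref{p38}, where the system \eqref{ez} was shown to have at most one pair of complex conjugate roots by realizing the real solutions as intersections of a line with a piecewise-monotone curve taking every value on each subinterval. I would invoke that same monotonicity structure here, now applied to the equation defining the tangent lines, to guarantee the requisite number of real intersection points. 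The existence of a center point (clause (2)) follows from the explicit rational form: a suitable point through which every line meets the winding curve in the full number of points can be located using the asymptotic behavior of the parametrization as $t$ ranges over $\RR P^1$.

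The hard part will be the precise degree computation for $J(t)$ and $J'(t)$, including the verification that no unexpected cancellation lowers the degree in the generic case and that exactly the expected cancellation occurs when $\gamma=b_1$; tracking the contribution of the pole at $\Phi_s(t)=1$ (the term $\frac{\kappa z}{z-1}$, i.e.\ the $z/(z-1)$ feature of $F_\kappa$) versus the poles at $\Phi_s(t)=-c_i$ is the delicate point, since it is this extra ``$+1$'' factor that produces the $(m+1)$ rather than $m$ in the class. I would isolate this by treating the $\frac{1}{\Phi_s(t)-1}$ term on the same footing as the $m$ terms $\frac{1}{\Phi_s(t)+c_i}$, so that the class naturally reads as $(m+1)$ times the degree of $\Phi_s$, the latter being $s$ generically and $s-1$ when the $\gamma=b_1$ cancellation occurs.
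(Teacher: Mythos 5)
Your proposal follows essentially the same route as the paper: the paper likewise passes to the explicit rational parametrization of the dual curve $C^{\vee}$ (via the classical formula $x^{\vee}=y'/(yx'-xy')$, $y^{\vee}=-x'/(yx'-xy')$ and the substitution $\tilde{t}=(1-\gamma)t+\gamma$), reads off the degree $(m+1)s$, resp.\ $(m+1)(s-1)$, from the rational structure of $\tilde{J}$ — with the drop at $\gamma=b_1$ coming from exactly the cancellation $\tilde{\alpha}_1=\tilde{b}_1=0$ you identify — and then verifies the winding property by reducing the intersection of $C^{\vee}$ with a line $y^{\vee}=cx^{\vee}+d$ to the equation $c-d\tilde{t}=\tilde{J}(\tilde{t})$ and invoking the same piecewise-monotonicity/real-rootedness argument (deferred to Proposition 5.4 of \cite{BL17}). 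Your sketch is consistent with this in every essential step.
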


The result about the frozen boundary being a cloud curve extends the result of
\cite{BL17} for the contracting square-hexagon lattice.

\begin{proof}
By Definition \ref{dfdu},  we need to show that the dual curve $C^{\vee}$ has degree $(m+1)s$ or $(m+1)(s-1)$ and is
  winding.

  We apply the classical formula to obtain from a parametrization $(x(t),
  y(t))$ of the curve $C$ defining the frozen boundary, another
  one for its dual $C^\vee$, 
    $(x^\vee(t),y^\vee(t))$:
    \begin{equation*}
      x^{\vee}=\frac{y'}{yx'-xy'},\quad
      y^{\vee}=-\frac{x'}{yx'-xy'}.
    \end{equation*}
    and obtain that 
    the dual curve $C^{\vee}$ is
  given in the following parametric form:
  \begin{equation}
    C^{\vee}=\left\{\left(-\frac{1}{(1-\gamma)t+\gamma},-\frac{(1-\gamma)J(t)+\gamma}{(1-\gamma)t+\gamma}\right)\ ;\
    t\in\mathbb{C}\cup\{\infty\}\right\}.
    \label{dual}
  \end{equation}
  from which we can see that its degree is $(m+1)s$ (resp.\ $(m+1)(s-1)$) if $\gamma<b_1$ (resp.\ $\gamma=b_1$). To show that $C^{\vee}$ is
  winding, we need to look at real intersections with straight lines.
  
  Let
  \begin{eqnarray*}
  \tilde{t}=(1-\gamma)t+\gamma;\qquad
  \tilde{J}(\tilde{t})=(1-\gamma)J(t)+\gamma.
  \end{eqnarray*}
  Then we can reparametrize the curve $C^\vee$ by 
  \begin{equation}
    C^{\vee}=\left\{\left(-\frac{1}{\tilde{t}},-\frac{\tilde{J}(\tilde{t})}{\tilde{t}}\right)\ ;\
    t\in\mathbb{C}\cup\{\infty\}\right\}.
    \label{dur}
  \end{equation}

  First, from Equation~\eqref{dur}, one sees that the first coordinate $x^\vee$ of
  the dual curve $C^\vee$ and the parameter $\tilde{t}$ are linked by the simple
  relation $x^\vee \tilde{t}=-1$.
  
  Using this relation to eliminate $t$ from the expression of the second
  coordinate, we obtain that the points $(x^\vee,\tilde{t})$ on the dual curve satisfy the
  following implicit equation:
  \begin{equation*}
    y^{\vee}=x^{\vee} \tilde{J}\left(-\frac{1}{x^\vee}\right).
  \end{equation*}

  The points of intersection $(x^\vee(\tilde{t}),y^\vee(\tilde{t}))$ of the dual curve with a straight line
  of the form $y^\vee=cx^\vee+d$ have a parameter $t$ satisfying:
  \begin{equation}
    (c-d\tilde{t})=\tilde{J}(\tilde{t})
    \label{eq:intersect_dual}
  \end{equation}
  Note that
  \begin{eqnarray*}
  \Phi_s(t)=\frac{(\tilde{t}-\gamma)(\tilde{t}-\alpha_2)\cdot\ldots\cdot(\tilde{t}-\alpha_s)}{(\tilde{t}-b_1)(\tilde{t}-b_2)\cdot\ldots\cdot(\tilde{t}-b_s)}:=\tilde{\Phi}_s(\tilde{t})
  \end{eqnarray*}
 and
 \begin{eqnarray*}
 \tilde{J}(\tilde{t})=\frac{|I_1\cap[n]|}{n}+\frac{1-\gamma}{\tilde{\Phi}_s(\tilde{t})-1}+\frac{1}{n}\sum_{i\in I_2\cap [n]}\frac{c_i}{\tilde{\Phi}_s(\tilde{t})+c_i}
 \end{eqnarray*}

Then the conclusion that the frozen boundary $C$ is a cloud curve follows from the same arguments as in the proof of Proposition 5.4 in \cite{BL17}.
\end{proof}

\begin{example}\label{ex3}Suppose that we have a sequence of contracting bipartite graphs satisfying:
\begin{itemize}
\item the boundary condition as in Assumption \ref{ap37} satisfies $s=2$, $\alpha_1=0$, $b_1=\frac{2}{3}$, $\alpha_2=1$, $b_2=\frac{4}{3}$; 
\item the edge weights as in Assumption \ref{apew} satisfy
\begin{itemize}
\item $n=3$, $\gamma=\frac{1}{3}$; and
\item  $x_1=x_2=1$, $x_3=0$; and
\item $y_1=0$, $y_2=1$, $y_3=2$; and
\item $a_1=1$, $a_2=a_3=0$.
\end{itemize}
\end{itemize}
\begin{figure}
\includegraphics[trim=3cm 8cm 3.3cm 8cm,clip,width=.6\textwidth]{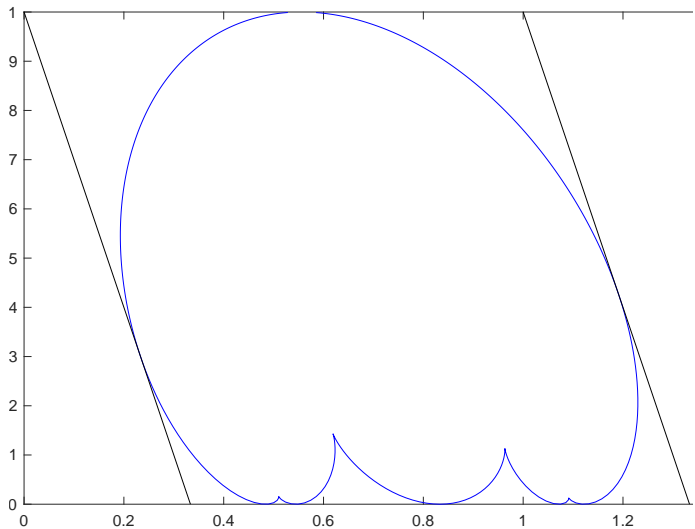}
\caption{Frozen boundary of Example \ref{ex3}}\label{fig:frozen1}
\end{figure}
Then by Theorem \ref{fb},
\begin{eqnarray*}
\Phi_2(t)&=&\frac{t(t-1)}{\left(t-\frac{1}{2}\right)\left(t-\frac{3}{2}\right)}.\\
J(t)&=&\Phi_2(t)\left[\frac{1}{\Phi_2(t)-1}-\frac{1}{2}\left(\frac{1}{\Phi_2(t)+1}+\frac{1}{\Phi_2(t)+\frac{1}{2}}\right)\right].
\end{eqnarray*}
The parametric equation for the frozen boundary is then given by
\begin{eqnarray*}
  \chi(t)&=&\frac{2}{3}\left(t-\frac{J(t)}{J'(t)}\right)+\frac{1}{3}\left(1-\frac{1}{J'(t)}\right),\\
    \kappa(t)&=&\frac{1}{J'(t)}
  \end{eqnarray*}
See Figure \ref{fig:frozen1} for the frozen boundary of the limit shape of perfect matchings. The region covered by the contracting bipartite graphs is the trapezoid region $(\chi,\kappa)$ bounded by $\kappa=0$, $\kappa=1$,$\chi=0$, $\kappa=-3\chi+4$. From the analysis above we see that the triangular region $(\chi,\kappa)$ bounded by $\kappa=0$, $\chi=0$, $\kappa=-3\chi+1$ must be frozen.
\end{example}

\section{Disconnected Liquid Regions}\label{sect:s4}

In this section, we prove the existence of multiple disconnected liquid regions for the limit shape of perfect matchings on any contracting square-hexagon lattices with certain edge weights, extending the results in \cite{ZL18}. The main theorem proved in this section is Theorem \ref{thm:m2}. We shall start with a few assumptions.

\begin{assumption}\label{ap423}Suppose the edge weights $(x_1,\ldots,x_N)$ satisfies Assumption \ref{apew} (1)(2). Moreover, 
\begin{itemize}
\item $x_1>x_2>\ldots>x_n>0$; and
\item $N$ is an integral multiple of $n$.
\end{itemize}
\end{assumption}

Let $\sigma_0$ be an arbitrary permutation in $\Sigma_N$ such that
\begin{eqnarray}
x_{\si_0(1)}\geq x_{\si_0(2)}\geq\ldots\geq x_{\si_0(N)}.\label{sz}
\end{eqnarray}

\begin{assumption}\label{ap428}Assume $x_1,\ldots,x_N$ satisfy Assumption \ref{apew} (1)(2) and Assumption \ref{ap423}.

Let $s\in[N]$.
Assume there exists positive integers $K_1,K_2,\ldots K_s$, such that 
\begin{enumerate}
\item $\sum_{t=1}^s K_t=N$;
\item
\begin{eqnarray}
\mu_1>\ldots>\mu_s\label{mi}
\end{eqnarray}
are all the distinct elements in $\{\lambda_1,\lambda_2,\ldots,\lambda_N\}$.
\item
\begin{eqnarray*}
&&\lambda_1=\lambda_2=\ldots=\lambda_{K_s}=\mu_1;\\
&&\lambda_{K_s+1}=\lambda_{K_s+2}=\ldots=\lambda_{K_s+K_{s-1}}=\mu_2;\\
&&\ldots\\
&&\lambda_{\sum_{t=2}^{s}K_t}=\lambda_{1+\sum_{t=2}^{s}K_t}=\ldots=\lambda_{\sum_{t=1}^{s}K_t}=\mu_s;
\end{eqnarray*}
 \item Let 
\begin{eqnarray}
J_i=\{t\in[s]:\exists p\in[N],\ \mathrm{s.t.}\ x_{\sigma_0(p)}=x_i,\mathrm{and}\ \lambda_p=\mu_t\}\label{ji}
\end{eqnarray}
\begin{enumerate}
\item If $1\leq i<j\leq n$, $\ell\in J_i$, and $t\in J_j$, then $\ell<t$.
\item For any $p,q$ satisfying $1\leq p\leq s$ and $1\leq q\leq s$, and $q>p$
\begin{eqnarray*}
L_1N \leq \mu_p-\mu_q\leq L_2N
\end{eqnarray*}
where $L_1$, $L_2$ are sufficiently large constants independent of $N$.
\item $s$ and $n$ are fixed as $N\rightarrow\infty$.
\end{enumerate}
\end{enumerate}
\end{assumption}

\noindent{\textbf{Remark.}} When the edge weights are periodic and satisfy Assumption 2.1 (2), the condition that $\exists p\in[N],\ \mathrm{s.t.}\ x_{\sigma_0(p)}=x_i$ is equivalent to the following condition:
\begin{eqnarray*}
\exists\ p\in \left[\frac{(i-1)N}{n}+1,\frac{i N}{n}\right]\cap \NN.
\end{eqnarray*}
Hence $J_i$ can also be defined as follows
\begin{eqnarray*}
J_i=\left\{t\in[s]:\exists\ p\in \left[\frac{(i-1)N}{n}+1,\frac{i N}{n}\right]\cap \NN,\ \mathrm{s.t.}\ \lambda_p=\mu_t\right\}
\end{eqnarray*}

An  equivalent condition of Assumption \ref{ap428} 4(a)  is as follows: 
\begin{eqnarray*}
\left\{\frac{iN}{n}: i\in [n]\right\}\subseteq \left\{\sum_{t=j}^s K_t: j \in[s]\right\}.
\end{eqnarray*}

\begin{assumption}\label{ap32}Assume $x_{1,N}=x_1>0$ and $(x_{2,N},\ldots,x_{n,N})$ changes with $N$. Assume that for each fixed $N$,  $(x_{1,N},\ldots,x_{n,N})$ satisfy Assumption \ref{ap423}. Suppose that Assumption \ref{ap428} holds. Moreover, assume that
 \begin{eqnarray*}
\liminf_{N\rightarrow\infty} \frac{\log\left(\min_{1\leq i<j\leq n}\frac{x_{i,N}}{x_{j,N}}\right)}{N}\geq \alpha>0,
\end{eqnarray*}
where $\alpha$ is a sufficiently large positive constant independent of $N$.
\end{assumption}

\begin{theorem}\label{thm:m2}
Suppose  Assumptions \ref{ap37}(1), \ref{ap423}, \ref{ap428} and \ref{ap32}  hold. Then the frozen boundary consists of $n$ disjoint cloud curves.
\end{theorem}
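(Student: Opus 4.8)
The plan is to decouple the contracting square-hexagon lattice with $n$ exponentially separated edge weights into $n$ single-weight subproblems, each of which is a contracting bipartite graph of the kind treated in Section \ref{sect:s3}, and then to apply Theorem \ref{fb} and Proposition \ref{prop:cloud1} to each piece separately. The exponential separation of Assumption \ref{ap32} is what makes this decoupling exact in the $N\to\infty$ limit, while the largeness of the constants $\alpha,L_1,L_2$ is what forces the resulting curves to be disjoint.

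First I would establish an asymptotic factorization of the Schur generating functions appearing in Lemma \ref{lm212}. Because the weights are strictly ordered, the permutation $\sigma_0$ of (\ref{sz}) sorts the variables by decreasing magnitude, and Assumption \ref{ap428}(4a) (equivalently $\left\{\frac{iN}{n}:i\in[n]\right\}\subseteq\left\{\sum_{t=j}^s K_t:j\in[s]\right\}$) aligns the weight blocks with the value blocks $\mu_1>\cdots>\mu_s$ of the boundary partition. Using the asymptotic expansion of Schur polynomials with widely separated variables from \cite{ZL18}, written through the quantities $\eta_j^{\sigma_0}$, $\Phi^{(i,\sigma_0)}$ and $\phi^{(i,\sigma_0)}$ of (\ref{et})--(\ref{pis}), the normalized logarithm of the Schur generating function splits, to leading order in $N$, into a sum of $n$ terms, the $i$th term depending only on the variables whose magnitude is $x_i$ and on the limit counting measure $\tilde{\bm}_i$ of $\phi^{(i,\sigma_0)}$. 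The key point, guaranteed by the gap $\log(x_{i,N}/x_{j,N})\geq\alpha N$ of Assumption \ref{ap32} together with the spacing $L_1N\leq\mu_p-\mu_q\leq L_2N$ of Assumption \ref{ap428}(4b), is that the off-diagonal contributions are exponentially negligible, so that the limit (\ref{nlc}) exists block by block.

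Second, for each fixed $i\in[n]$ I would identify the $i$th block as the limit-shape problem for a single-weight contracting bipartite graph: the one obtained from the square-hexagon lattice by keeping the weight $x_i$ and sending all the others to $0$, with bottom boundary data determined by the block $\phi^{(i,\sigma_0)}$, whose segment structure is inherited from Assumption \ref{ap37}(1). After checking that this reduced graph satisfies the hypotheses of Theorem \ref{fb}, that theorem and Proposition \ref{prop:cloud1} furnish a frozen boundary that is a cloud curve $C_i$. Superposing the $n$ limit densities obtained from Theorem \ref{p36} shows that, up to exponentially small corrections, the density of the limit counting measure of the full lattice at each level $\kappa$ is the concatenation of the $n$ single-weight densities; hence the liquid region of the full problem is the union of the $n$ liquid regions, and the full frozen boundary is $\bigcup_{i=1}^n C_i$.

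The final and hardest step is to prove that the $n$ cloud curves are pairwise disjoint. The $\chi$-coordinate of the liquid band generated by weight group $i$ is shifted by the group-constant value of $\eta_j^{\sigma_0}$, which equals $\frac{(n-i)N}{n}$ and is forced by the decreasing ordering of the weights; after normalization this produces a shift of order one between consecutive bands. I would then show that, for $\alpha,L_1,L_2$ taken sufficiently large, these shifts dominate the widths of the individual bands \emph{uniformly} in $\kappa\in(0,1)$, so that the $\chi$-supports of the $n$ bands lie in pairwise disjoint intervals at every level. This uniform-in-$\kappa$ separation — which amounts to controlling the real roots of the analogue of the system (\ref{ez}) for each block across all levels simultaneously, rather than merely the leading-order factorization — is the principal technical obstacle. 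Once it is in place, the closures of $C_1,\dots,C_n$ do not meet, and the frozen boundary consists of exactly $n$ disjoint cloud curves.
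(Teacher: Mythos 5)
Your overall strategy---decouple the lattice into $n$ blocks via the asymptotics of Schur polynomials with exponentially separated variables, obtain one cloud curve per block, and use the largeness of the constants to separate the curves---is the paper's strategy (the paper imports the factorization from \cite{ZL18}, expression (7.2) there, rather than re-deriving it, and the per-block curves are exactly the systems (\ref{s1})--(\ref{s2}) analyzed in Lemma \ref{l45}). Two of your identifications, however, are off in ways that matter. First, it is not true that the $i$th block is ``the square-hexagon lattice with weight $x_i$ kept and the others sent to $0$'' for every $i$: in the decoupled limit the square-row contributions migrate entirely into the block of the \emph{largest} weight, so only $F_{1,\kappa}$ carries the terms $\sum_{r\in I_2\cap[n]} y_r x_1/(1+y_r x_1 z)$, while the blocks $i\ge 2$ reduce to \emph{uniform hexagon-lattice (lozenge) problems} with no square-row terms at all. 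Your version can be patched, since $y_r x_{i,N}\to 0$ for $i\ge 2$ kills those terms anyway, but then you would be applying Theorem \ref{fb} with $N$-dependent weights, which it does not cover as stated; the paper instead identifies block $1$ with the fixed-weight contracting bipartite graph of Section \ref{sect:s3} (with $\gamma=\tfrac{n-1}{n}$ and boundary $\phi^{(1,\sigma_0)}$) and blocks $i\ge 2$ with lozenge tilings, invoking Proposition 5.4 of \cite{BL17} for the latter.

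Second, your disjointness mechanism is backwards. The shift coming from $\eta_j^{\sigma_0}$ is of order one after normalization and cannot be made to ``dominate the widths'' by enlarging $L_1$; order-one shifts separate nothing. What actually separates the curves is that the supports $\cup_{k}[\beta_{i,k},\gamma_{i,k}]$ of the block measures $\bm_i$ are pairwise separated by an amount of order $L_1$, because $\mu_p-\mu_q\ge L_1 N$ across blocks, and each $C_i$ is confined to an explicit strip around the support of $\bm_i$ shifted by the order-one, $\kappa$-dependent amount $\kappa(n-i)/n$. For $L_1$ large these strips are disjoint uniformly in $\kappa$; that is the content of Lemma \ref{l45}, and it is much softer than the uniform-in-$\kappa$ root control you anticipate as the main obstacle. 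Finally, the step you gloss as ``superposing the densities'' is where the paper does real work: it writes $\mathrm{St}_{\bm^{\kappa}}(x)=\sum_{i=1}^{n}\log z_i^{\kappa}(x)$ and uses a sign argument (Lemma 7.9 of \cite{ZL18}) to show that at most one $z_i$ can be non-real at any $(\chi,\kappa)$ and that it is non-real exactly when the corresponding block system has a pair of complex conjugate roots; without that, the claim that the liquid region of the full problem is the disjoint union of the block liquid regions is an assertion, not a proof.
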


The conclusion of Theorem \ref{thm:m2} under the further condition that $|I_2\cap[n]|\in\{0,1\}$ was proved in Theorem 2.20 of \cite{ZL18}. Here we prove the theorem for all the contracting square-hexagon lattice when edge weights satisfy the Assumptions \ref{ap423}, \ref{ap32} and bottom boundary conditions satisfy Assumptions \ref{ap37}(1) and \ref{ap428}.

For $i\in[n]$, let $J_i$ be defined as in (\ref{ji}).
Under Assumptions \ref{ap423} and \ref{ap428}, assume that 
\begin{eqnarray*}
J_i=\left\{\begin{array}{cc}\{d_i,d_i+1,\ldots,d_{i+1}-1\}& \mathrm{if}\ 1\leq i\leq n-1\\ \{d_n, d_n+1,\ldots,s\}& \mathrm{if}\ i=n \end{array}\right.
\end{eqnarray*}
where $d_1,\ldots,d_n$ are positive integers satisfying
\begin{eqnarray*}
1=d_1<d_2<\ldots <d_n\leq s
\end{eqnarray*}
Let
\begin{eqnarray*}
d_{n+1}:=s+1.
\end{eqnarray*}

If $i\in[n]$, for $0\leq k\leq d_{i+1}-d_i-1:=D_i$, let
\begin{eqnarray}
\beta_{i,k}&=&n\left(\alpha_1+\sum_{l=2}^{s-d_i-k+1}(\alpha_l-b_{l-1})\right)+n-i+1-n\left(\sum_{l=s-d_i-k+1}^{s-d_i+1}(b_l-\alpha_l)\right)\label{dbt}\\
\gamma_{i,k}&=&n\left(\alpha_1+\sum_{l=2}^{s-d_i-k+1}(\alpha_l-b_{l-1})\right)+n-i+1-n\left(\sum_{l=s-d_i-k+2}^{s-d_i+1}(b_l-\alpha_l)\right).\label{drm}
\end{eqnarray}
For $i\in[n]$, let
 \begin{equation}
    \Psi_i(t_i)=\frac{(t_i-\beta_{i,0})(t_i-\beta_{i,1})\cdots(t_i-\beta_{i,D_i})}{(t_i-\gamma_{i,0})(t_i-\gamma_{i,1})\cdots(t_i-\gamma_{i,D_i})};\label{psi}
\end{equation}
and for $j\in[s]$, let $a_j,b_j$ be given by (\ref{cc2}).

For
$\kappa\in(0,1)$ , $2\leq i\leq n$, let:
\begin{equation*}
F_{i,\kappa}(z,t_i):=
\frac{z}{(1-\kappa)n}\left(\frac{t_i}{z}-\frac{1}{z-1}-\frac{n-i}{z}\right)+\frac{z}{n(z-1)}+\frac{n-i}{n}.
\end{equation*}
and 
\begin{equation*}
F_{1,\kappa}(z,t_1):=
\frac{z}{(1-\kappa)n}\left(\frac{t_1}{z}-\frac{1}{z-1}-\frac{n-1}{z}+\kappa\sum_{r\in I_2\cap [n]}\frac{y_{r}x_1}{1+y_{r}x_1z}\right)+\frac{z}{n(z-1)}+\frac{n-1}{n}.
\end{equation*}

Let $C_1$ be the curve consisting of all the points $(\chi,\kappa)\in [0,\infty)\times [0,1]$ such that the following system of equations has a double root in $z$
\begin{equation}
      \begin{cases}
	\Psi_1(t_1)=z;\\
	n(1-\kappa)F_{1,\kappa}(z) =
	t_1-\kappa \left[ \frac{1}{z-1}+(n-l)+\sum_{j=1}^{m}\frac{n_j\gamma_j}{z+\gamma_j}\right]
=n\chi.
      \end{cases}\label{s1}
      \end{equation}
where $\gamma_1,\ldots,\gamma_m$ are all the distinct values in $\left\{\frac{1}{y_ix}\right\}_{i\in I_2\cap[n]}$; for $j\in[m]$, $n_j$ is the number of $i\in I_2\cap [n]$ such that $\frac{1}{y_ix}=\gamma_j$; and 
\begin{eqnarray*}
l=I_2\cap[n].
\end{eqnarray*}

For $i\in\{2,3,\ldots,n\}$, let $C_i$ be the curve consisting of all the points $(\chi,\kappa)\in [0,\infty)\times [0,1]$ such that the following system of equations has a double root in $z$ 
\begin{equation}
      \begin{cases}
	\Psi_i(t_i)=z;\\
	n(1-\kappa)F_{i,\kappa}(z) =
	t_i-\kappa
\left[(n-i+1)+\frac{1}{z-1}\right]
 =n\chi.
      \end{cases}\label{s2}
    \end{equation}

Then we have the following lemma
\begin{lemma}\label{l45}Let $i\in[n]$. The curve $C_i$ has an explicit parametrization given by 
  \begin{equation*}
    \chi_i(t_i)=\frac{1}{n}\left[t_i-\frac{J_i(t_i)}{J_i'(t_i)}\right],\quad
    \kappa_i(t_i)=\frac{1}{J_i'(t_i)},
  \end{equation*}
  where for $2\leq i\leq n$;
\begin{eqnarray*}  
J_i(t_i)=(n-i+1)+\frac{1}{\Psi_i(t_i)-1};
\end{eqnarray*}
for $i=1$
\begin{eqnarray*}
J_1(t_1)=\frac{1}{\Psi_1(t_1)-1}+n-l+\sum_{i\in [n]\cap I_2}\frac{c_j}{\Psi_1(t_1)+c_j}
\end{eqnarray*} 
where $c_j=\frac{1}{y_jx_1}$ for $j\in [n]\cap I_2$.
Moreover, the curves $C_1,\ldots,C_n$ are disjoint cloud curves. 
\end{lemma}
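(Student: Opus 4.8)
The plan is to establish Lemma \ref{l45} in two stages: first, deriving the explicit parametrization of each $C_i$ from its defining system of equations, and second, proving that the resulting curves $C_1,\ldots,C_n$ are disjoint cloud curves. For the parametrization, I would proceed exactly as in the proof of Theorem \ref{fb}. The first equation of each system (\eqref{s1} or \eqref{s2}) reads $\Psi_i(t_i)=z$, which lets me eliminate $z$ in favor of $t_i$; the second equation is linear in $t_i$ and sets a certain expression equal to $n\chi$. Substituting $z=\Psi_i(t_i)$ and collecting terms, the second equation becomes $n\chi = t_i - \kappa J_i(t_i)$, where $J_i$ is precisely the function given in the statement (the form of $J_i$ differs for $i=1$ because $F_{1,\kappa}$ carries the extra sum over $I_2\cap[n]$ coming from the square rows and the weights $y_r$). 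The defining condition that the system \emph{has a double root in $z$} (equivalently in $t_i$, since $z$ and $t_i$ are related by the rational map $\Psi_i$) translates, by the standard argument, into requiring that the $t_i$-derivative of the relation also vanish, giving the pair
\begin{equation*}
  n\chi = t_i - \kappa J_i(t_i), \qquad 0 = 1 - \kappa J_i'(t_i).
\end{equation*}
Solving the second for $\kappa=1/J_i'(t_i)$ and back-substituting yields $\chi_i(t_i)=\frac{1}{n}\left[t_i-\frac{J_i(t_i)}{J_i'(t_i)}\right]$, which is the claimed parametrization.

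For the \textbf{cloud curve property of each individual $C_i$}, I would invoke Proposition \ref{prop:cloud1} (or rather the argument behind it). Each $C_i$ has the same structural form as the frozen boundary $C$ of Theorem \ref{fb}: $J_i$ is built from a single rational function $\Psi_i$ of the form (ratio of two degree-$(D_i+1)$ polynomials in $t_i$) together with a sum of simple-pole terms, so the dual-curve computation of Proposition \ref{prop:cloud1}, using the formulas $x^\vee=y'/(yx'-xy')$, $y^\vee=-x'/(yx'-xy')$, applies verbatim to produce a rational parametrization of $C_i^\vee$ whose degree and winding properties are governed by $\Psi_i$ exactly as in the proof of Proposition 5.4 of \cite{BL17}. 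The only bookkeeping is to verify that $\Psi_i$, as defined by \eqref{psi} with the explicit exponents $\beta_{i,k},\gamma_{i,k}$ from \eqref{dbt}--\eqref{drm}, genuinely has the interlacing pole/zero structure needed for the winding argument; this follows from $b_1\geq\gamma$ and the ordering $0=\alpha_1<b_1<\cdots<\alpha_s<b_s$.

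The \textbf{main obstacle is disjointness} of the $C_i$, and this is where Assumption \ref{ap32} enters decisively. The key idea is that each $C_i$ is the frozen boundary of an auxiliary lozenge-tiling (or square-hexagon) model associated to the block of rows with edge-weight value $x_i$, and that these blocks become asymptotically decoupled because the ratios $x_{i,N}/x_{j,N}$ decay exponentially in $N$ (the condition $\liminf \frac{1}{N}\log\min_{i<j} x_{i,N}/x_{j,N}\geq\alpha$). Concretely, I would show that for $i\neq i'$ the horizontal ranges (the $\kappa$-intervals) on which $C_i$ and $C_{i'}$ can have real points are separated, or alternatively that their $\chi$-coordinates occupy disjoint windows, by tracking how the parameters $\beta_{i,k},\gamma_{i,k}$ — which encode the integer shifts $\eta_j^\sigma$ and the partition data $\mu_t$ — are forced apart by the growth condition $L_1N\leq\mu_p-\mu_q\leq L_2N$ of Assumption \ref{ap428}(4b). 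The delicate point will be quantifying that the exponential weight separation dominates all the $o(N)$ error terms in the asymptotics \eqref{cc2}, so that the liquid regions bounded by the distinct $C_i$ cannot overlap or touch; I expect this to require an estimate showing that the contribution of the weight $x_i$ to the relevant Schur-function asymptotics is concentrated on the rows in $J_i$ up to exponentially small corrections, so that the limit shape genuinely factorizes into $n$ independent pieces, one per distinct weight value.
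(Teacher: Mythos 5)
Your treatment of the parametrization and of the cloud-curve property of each individual $C_i$ follows the same route as the paper: the paper obtains the parametrization by substituting $z=\Psi_i(t_i)$ into the second equation of (\ref{s1}) or (\ref{s2}) and imposing the double-root condition to get $n\chi=t_i-\kappa J_i(t_i)$, $1=\kappa J_i'(t_i)$ (it cites the analogous computation in Theorem 7.10 of \cite{ZL18}, which is the same argument as in Theorem \ref{fb} here), and it gets the cloud-curve property of each $C_i$ from Proposition 5.4 of \cite{BL17}. That part of your proposal is fine.

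For disjointness you have identified the wrong load-bearing assumption, and the program you call the ``delicate point'' belongs to a different statement. The curves $C_i$ are defined purely by the deterministic systems (\ref{s1})--(\ref{s2}), whose only data are the numbers $\beta_{i,k},\gamma_{i,k}$ extracted from the boundary partition (the weights enter only through $x_1$ in $F_{1,\kappa}$); the exponential separation of the $x_{i,N}$ in Assumption \ref{ap32} plays no role in this lemma. The paper's mechanism is geometric rather than analytic: after the affine change of variables $\tilde{\chi}_1=\chi+\frac{n-1}{n}$, the system (\ref{s1}) becomes the system (\ref{pkf}) for a contracting bipartite graph with $\gamma=\frac{n-1}{n}$ and boundary measure supported on $\cup_k[\beta_{1,k},\gamma_{1,k}]$, and after $\tilde{\chi}_i=n\chi+\kappa(n-i)$ the system (\ref{s2}) becomes the frozen-boundary system of a uniform contracting hexagon lattice with boundary measure $\mathbf{m}_i$. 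Since a frozen boundary is automatically confined to the trapezoid over the support of its boundary measure, undoing the change of variables traps $C_i$ between $\kappa=0$, $\kappa=1$, $\chi=\frac{\beta_{i,D_i}-\kappa(n-i)}{n}$ and $\chi=\frac{\gamma_{i,0}-\kappa(n-i)}{n}$, and Assumption \ref{ap428}(4)(b) with $L_1$ large forces these $n$ slabs to be pairwise disjoint. Your ``$\chi$-coordinates occupy disjoint windows'' alternative is thus the correct kernel, but it is established by this recognition of each $C_i$ as an auxiliary frozen boundary, not by a Schur-function concentration estimate; the latter (showing the asymptotics factorize over the blocks $J_i$, where Assumption \ref{ap32} genuinely enters) is needed only in the proof of Theorem \ref{thm:m2}, to identify the true frozen boundary with $\cup_i C_i$. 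Finally, your first alternative --- separating the curves by their $\kappa$-intervals --- cannot work: every $C_i$ reaches down to $\kappa=0$, so these intervals always overlap; only the $\chi$-windows separate.
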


\begin{proof}For each $i\in[n]$, the parametric equation of $C_i$ and the fact that $C_i$ is a cloud curve of class $D_i$ follow from the same arguments as in the proof of Theorem 7.10 in \cite{ZL18}.

 We need to show that $C_1,\ldots, C_n$ are disjoint. Note that $C_i$ is characterized by the condition that the system (\ref{s1}) or (\ref{s2}) of equations have double roots in $z$. When $N$ is an integer multiple of $n$, recall that the partition $\phi^{(i,\si)}(N)\in \YY_{\frac{N}{n}}$ was defined as in (\ref{pis}), and $\si_0\in \Si_N$ was defined as in (\ref{sz}). When the bottom boundary conditions satisfy Assumptions  
\ref{ap37}(1) and \ref{ap428}, for each $i\in[n]$, the counting measures for $\phi^{(i,\si_0)}(N)$ converge weakly to a limit measure, denoted by $\bm_i$, as $N\rightarrow\infty$. The measure $\bm_i$ has density 1 in $\cup_{k=0}^{D_i}[\beta_{i,k},\gamma_{i,k}]$, and density 0 elsewhere; see Lemma 4.8 of \cite{ZL18}.
Note that the supports for $\bm_i$, $i\in[n]$, are pairwise disjoint.

 We make a change of variables in (\ref{s1}) as follows.
  \begin{eqnarray*}
 \begin{cases}
 \tilde{\chi}_1=\chi+\frac{n-1}{n}\\
 \tilde{\kappa}_1=\kappa
 \end{cases},
 \end{eqnarray*}
Then after the change of variables, (\ref{s1}) becomes the same as (\ref{pkf}) when $\gamma=\frac{n-1}{n}$, $D_1=s-1$ and $(\beta_{1,D_1},\gamma_{1,D_1},\ldots,\beta_{1,0},\gamma_{1,0})=(\tilde{\alpha}_1,\tilde{b}_1,\ldots,\tilde{\alpha}_s,\tilde{b}_s)$, and $x_1=x$. Let $\tilde{C}_1=C_1+\left(\frac{n-1}{n},0\right)$. Then $\tilde{C}_1$ is the frozen boundary of the limit shape of perfect matchings a contracting bipartite graph, hence it is restricted in the region bounded by
 \begin{eqnarray*}
 &&\tilde{\kappa}_1=0;\qquad\tilde{\kappa}_1=1;\\
 &&\tilde{\chi}_1=\beta_{1,D_1}(1-\gamma)=\frac{\beta_{1,D_1}}{n}\\
 &&\tilde{\chi}_1=\gamma_{1,0}(1-\gamma)+\gamma=\frac{\gamma_{1,0}}{n}+\frac{n-1}{n}
 \end{eqnarray*}
 Hence $C_1$ is restricted in the region bounded by
 \begin{eqnarray*}
 &&\kappa=0;\qquad \kappa=1\\
 &&\chi=\frac{\beta_{1,D_1}}{n}-\frac{n-1}{n}\\
 &&\chi=\frac{\gamma_{1,0}}{n}
 \end{eqnarray*}
 
 We also make a change of variables in (\ref{s2}). For $i\in\{2,3,\ldots,n\}$, let
 \begin{eqnarray*}
 \begin{cases}
 \tilde{\chi}_i=n\chi+\kappa(n-i)\\
 \tilde{\kappa}_i=\kappa
 \end{cases},
 \end{eqnarray*}
 and let $\tilde{C}_i$ be the corresponding curve in the new coordinate system $\tilde{\chi}_i,\tilde{\kappa}_i$. Then $\tilde{C}_i$ is the frozen boundary of a uniform dimer model on contracting hexagon lattice with boundary condition given by $\bm_i$. Then the fact that $\tilde{C}_i$ and $C_i$ are cloud curves follows from Proposition 5.4 of \cite{BL17}. Hence $\tilde{C}_i$ is restricted in the region bounded by 
 \begin{eqnarray*}
 &&\tilde{\kappa}_1=0;\qquad
 \tilde{\kappa}_1=1\\
 &&\tilde{\chi}_1=\beta_{i,D_i}; \qquad
\tilde{\chi}_1=\gamma_{i,0}
 \end{eqnarray*}
 Hence $C_i$ is restricted in the region bounded by 
 \begin{eqnarray*}
 &&\kappa=0;\qquad
 \kappa=1\\
 &&\chi=\frac{\beta_{i,D_i}-\kappa(n-i)}{n};\\ 
&&\chi=\frac{\gamma_{i,0}-\kappa(n-i)}{n}
 \end{eqnarray*}
 It is straightforward to check that when Assumption \ref{ap428}(4)(b) holds with constant $L_1$ sufficiently large, the regions described above containing different $C_i$ are disjoint; therefore $C_i$ are disjoint. 
\end{proof}

\noindent\textbf{Proof of Theorem \ref{thm:m2}.}
We consider a contracting square-hexagon lattice with edge weights satisfying Assumption \ref{apew}, $\gamma=\frac{n-1}{n}$, and boundary partition given by
\begin{eqnarray*}
\omega:=\left(\phi^{(1,\si_0)}(N),0^{\frac{N(n-1)}{n}}\right)\in\YY_{N}. 
\end{eqnarray*}
i.e., $\omega$ is a partition with $N$ components, the $\frac{N}{n}$ components at the beginning are those of $\phi^{(1,\si_0)}(N)$'s, and the remaining $\frac{N(n-1)}{n}$ components are 0. Let $\bm_{\omega}$ be the limit counting measure for $\omega$, as $N\rightarrow\infty$, and let $\tilde{\bm}_{\omega}$ be the limit counting measure for $\phi^{(1,\si_0)}(N)$.
Let $\kappa\in(0,1)$, and $\bm_{\omega}^{\kappa}$ be the limit counting measure for the partitions on the $\left\lfloor\frac{2\kappa N}{n}\right\rfloor$th row, counting from the bottom. Let $\tilde{\bm}_{\omega}^{\kappa}$ be the limit counting measure of the partitions obtained from the partitions on the $\left\lfloor\frac{2\kappa N}{n} \right\rfloor$th row by removing the $\frac{(1-\kappa)(n-1)N}{n}$ 0's in the end. By Proposition \ref{p36}, we obtain
\begin{eqnarray*}
\sum_{j=0}^{\infty}\frac{\int_{\RR}x^j \tilde{\bm}_{\omega}^{\kappa}(dx)}{s^{j+1}}=-\frac{1}{2\pi \mathbf{i}}\oint_1\frac{dz}{z}\log\left(1-\frac{F_{\kappa}(z)}{s}\right).
\end{eqnarray*}
Following similar computations as in Section 7 of \cite{ZL18}, we obtain that 
\begin{eqnarray}
\mathrm{St}_{\tilde{\bm}^{\kappa}_{\omega}}\left(x\right)=\log (z^{\kappa}(x))\label{sl}
\end{eqnarray}
where $z^{\kappa}(x)$ is the solution of $F_{\kappa}(z)=x$ in a neighborhood of $1$ when $x$ is in a neighborhood of infinity. When $x$ is not in a neighborhood of infinity but outside the support of $\tilde{\bm}_{\omega}^{\kappa}$, the identity (\ref{sl}) is obtained by analytic extension. 

We claim that if complex roots exists for (\ref{pkf}) with $x=\frac{\chi-\gamma(1-\kappa)}{(1-\gamma)(1-\kappa)}$ and $\gamma=\frac{n-1}{n}$, then $z^{\kappa}(x)$ cannot be real. 

To see why that is true, from (\ref{sl}) we obtain
\begin{eqnarray*}
z^{\kappa}(x)=\mathrm{exp}\left(\int_{\RR}\frac{\tilde{\bm}_{\omega}^{\kappa}[ds]}{x-s}\right);
\end{eqnarray*}
and 
\begin{eqnarray*}
z^{\kappa}(x)=\exp\left(\int_{\RR}\frac{(x-s-\mathbf{i}\epsilon) \tilde{\bm}_{\omega}^{\kappa}[ds]}{\left(x-s\right)^2+\epsilon^2}\right)
\end{eqnarray*}
Therefore $\Im[z^{\kappa}(x+\mathbf{i}\epsilon)]<0$ when $\epsilon$ is a small positive number. However, when complex roots exist for (\ref{pkf}), for real root $s(x)$, Lemma 7.9 of \cite{ZL18} implies that $\Im[s(x+\mathbf{i}\epsilon)]\geq 0$ when $\epsilon$ is a small positive number. This implies that when complex roots exist for (\ref{sl}), $z^{\kappa}(x+\mathbf{i}\epsilon)$ cannot be real.

From expression (7.2) of \cite{ZL18}, we obtain
\begin{eqnarray*}
\mathrm{St}_{\bm^{\kappa}}(x)=\sum_{i=1}^{n}\log(z_i^{\kappa}(x))
\end{eqnarray*}
where $\bm^{\kappa}$ is the limit counting measure for partitions corresponding to dimer configurations at level $\kappa$ on a contracting square-hexagon lattice with edge weights and bottom boundary conditions satisfying Assumptions \ref{ap423}, \ref{ap428}, \ref{ap32} and \ref{ap37}(1). Note that
\begin{itemize}
\item When $x$ is in a neighborhood of $\infty$, $z_1(x)$ is the root of (\ref{s1}) in a neighborhood of (\ref{s1}) when $x=n\chi$. Therefore
\begin{eqnarray*}
z_1^{\kappa}(x)=z^{\kappa}\left(nx+\frac{\kappa(n-1)}{1-\kappa}\right); 
\end{eqnarray*}
and
\item when $x$ is in a neighborhood of $\infty$ and $2\leq i\leq n$, $z_i(x)$ is the root of (\ref{s2}) in a neighborhood of 1. It follows from Section 7.1 of \cite{ZL18}, that when (\ref{s2}) has a pair of complex conjugate roots (non-real) in $z$, $z_i(x)$ cannot be real; and
\item By Lemma \ref{l45}, the regions when each one of (\ref{s1}) or (\ref{s2}) has a pair of complex conjugate roots are disjoint. That is, at each point of the trapezoid domain occupied by the square-hexagon lattice, at most one of the $z_i$'s ($1\leq i\leq n$) is not real.  
\end{itemize}

Therefore, under the assumptions of the theorem, the frozen boundary is given by the condition that one of the  $n$ equation systems in (\ref{s1}), (\ref{s2}) has double roots in $z$. Then the theorem follows from Lemma \ref{l45}.
$\hfill\Box$

\begin{example}\label{ex4}Consider a contracting square-hexagon lattice with period $1\times 2$. Let $x_1=1$, and $\frac{x_2}{x_1}\leq e^{-\alpha N}$; $y_1=4$ and $y_2=1/4$. Assume $N$ is an integer multiple of $6$. Assume the boundary partition $\lambda(N)$ satisfies
\begin{eqnarray*}
&&\lambda_1(N)=\lambda_2(N)=\ldots=\lambda_{\frac{N}{4}}(N)=\mu_1(N)\\
&&\lambda_{\frac{N}{4}+1}(N)=\lambda_{\frac{N}{4}+2}(N)=\ldots=\lambda_{\frac{N}{2}}(N)=\mu_2(N)\\
&&\lambda_{\frac{N}{2}+1}(N)=\lambda_{\frac{N}{2}+2}(N)=\ldots=\lambda_{\frac{2N}{3}}(N)=\mu_3(N)\\
&&\lambda_{\frac{2N}{3}+1}(N)=\lambda_{\frac{N}{2}+2}(N)=\ldots=\lambda_{\frac{5N}{6}}(N)=\mu_4(N)\\
&&\lambda_{\frac{5N}{6}+1}(N)=\lambda_{\frac{N}{2}+2}(N)=\ldots=\lambda_{N}(N)=\mu_5(N)=0.
\end{eqnarray*}
For $1\leq j\leq 5$, let
\begin{eqnarray*}
r_j=\lim_{N\rightarrow\infty}\frac{\mu_j(N)}{N}.
\end{eqnarray*}
Note that $r_5=0$. Then
\begin{eqnarray*}
\Psi_1(t_1)&=&\frac{\left(t_1-2r_1-\frac{3}{2}\right)\left(t_1-2r_2-1\right)}{\left(t_1-2r_1-2\right)\left(t_1-2r_2-\frac{3}{2}\right)}; \\
\Psi_2(t_2)&=&\frac{t_2\left(t_2-2r_4-\frac{1}{3}\right)\left(t_2-2r_3-\frac{2}{3}\right)}{(t_2-\frac{1}{3})\left(t_2-2r_4-\frac{2}{3}\right)(t_2-2r_3-1)};\\
J_1(t_1)&=&\frac{1}{\Psi_1(t_1)-1}+\frac{1}{\Psi_1(t_1)+1}+\frac{1}{2\Psi_1(t_1)+1}; \\
J_2(t_2)&=&\frac{1}{\Psi_2(t_2)-1}+1.
\end{eqnarray*}
Then the frozen boundary is a union of the following two parametric curves.
\begin{eqnarray*}
\begin{cases}
\chi_1(t_1)=\frac{1}{2}\left[t_1-\frac{J_1(t_1)}{J_1'(t_1)}\right]\\
\kappa_1(t_1)=\frac{1}{J_1'(t_1)}
\end{cases}
\end{eqnarray*}
and
\begin{eqnarray*}
\begin{cases}
\chi_2(t_2)=\frac{1}{2}\left[t_2-\frac{J_2(t_2)}{J_2'(t_2)}\right]\\
\kappa_2(t_2)=\frac{1}{J_2'(t_2)}
\end{cases}
\end{eqnarray*}
For $(r_1,r_2,r_3,r_4)=(6,5,2,1)$, see Figure \ref{fig:fbr} for a picture of the frozen boundary.

\begin{figure}
 \includegraphics[width=0.9\textwidth]{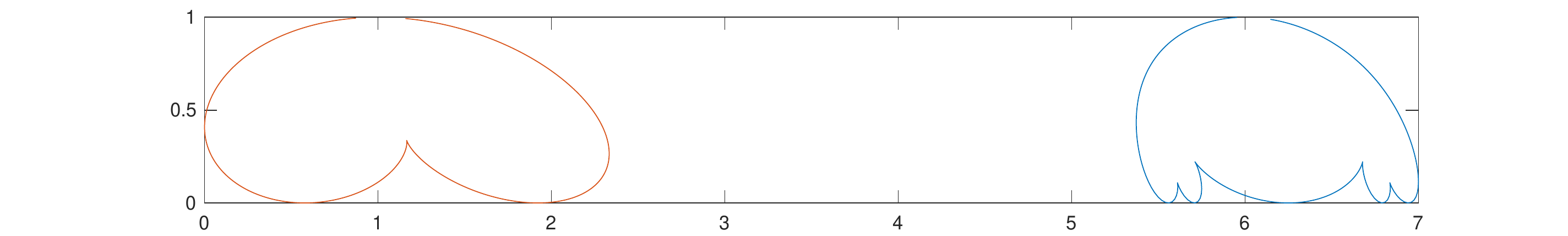}
\caption{Frozen boundary of Example \ref{ex4}}
\label{fig:fbr}
\end{figure}
\end{example}

\bigskip
\noindent\textbf{Acknowledgements.} ZL acknowledges support from National Science Foundation DMS 1608896 and Simons Foundation 638143. The author thanks the anonymous reviewer for helpful advice to improve the readability of the paper.

\bibliography{fpmm}
\bibliographystyle{plain}

\end{document}